\newtheorem{lemma}{Lemma}
\newtheorem*{lemma*}{Lemma}
\newtheorem{theorem}{Theorem}
\newtheorem*{theorem*}{Theorem}
\newcommand{\dotminus}{\mathbin{\dot{-}}}
\DeclareMathOperator\mat{M}
\DeclareMathOperator\unit{U}
\DeclareMathOperator\stmap{st}
\DeclareMathOperator\glin{GL}
\DeclareMathOperator\stlin{St}
\DeclareMathOperator\Ker{Ker}
\DeclareMathOperator\can{can}
\newcommand{\eps}{\varepsilon}
\newcommand{\leqt}{\trianglelefteq}
\newcommand{\inv}[1]{\!\;\overline{\!\!\:#1\vphantom !\!\!\:}\;\!}
\DeclareMathOperator{\Aut}{Aut}
\DeclareMathOperator{\Spec}{Spec}
\newcommand{\up}[2]{{^{#1}\!{#2}}}
\newcommand{\Set}{\mathbf{Set}}
\newcommand{\Group}{\mathbf{Grp}}
\DeclareMathOperator{\Pro}{Pro}
\title{Cosheaves of Steinberg pro-groups}
\author{
  Egor Voronetsky\thanks{The work was supported by the Theoretical Physics and Mathematics Advancement Foundation ``BASIS'' and by ``Native towns'', a social investment program of PJSC ``Gazprom Neft''.} \\
  Chebyshev Laboratory, \\
  St. Petersburg State University, \\
  14th Line V.O., 29B, \\
  Saint Petersburg 199178 Russia \\
}
\begin{document}
\maketitle

\begin{abstract}
Steinberg pro-groups are certain pro-groups used to analyze ordinary Steinberg groups locally in Zariski topology. In this paper we show that Steinberg pro-groups associated with general linear groups, odd unitary groups, and Chevalley groups satisfy a Zariski cosheaf property as crossed pro-modules over the base groups. Also, we prove an analogue of the standard commutator formulae for relative Steinberg groups. As an application, we show that the base groups over localized rings naturally act on the corresponding Steinberg pro-groups.
\end{abstract}

\section{Introduction}

In this paper \(G(\Phi, A)\) denotes either a Chevalley group of type \(\Phi\) over a unital commutative ring \(A\), or a general linear group constructed by a unital associative ring \(A\) with a complete family of full orthogonal idempotents (and \(\Phi\) is of type \(\mathsf A_\ell\)), or an odd unitary group constructed by an odd form ring \(A = (R, \Delta)\) with a strong orthogonal hyperbolic family in the sense of \cite[\S4]{central-ku2}, \cite[\S3.3]{thesis}, and \cite[\S2]{rel-stu} (and \(\Phi\) is of type \(\mathsf{BC}_\ell\)).

For any such group \(G(\Phi, A)\) there is a corresponding Steinberg group \(\stlin(\Phi, A)\), it is generated by the root elements of \(G(\Phi, A)\) but only with the ``obvious'' relations between them. If the rank of \(\Phi\) is at least \(3\) and \(A\) is locally finite over the base unital commutative ring \(K\), then the canonical homomorphism \(\stlin(\Phi, A) \to G(\Phi, A)\) has central kernel. In some cases (notably, for Chevalley groups of types \(\mathsf{A, C, D, E}\)) this is proved in \cite{central-ko2, central-k2c, central-k2d, central-k2e, central-k2-local, central-k2-tulen, central-k2-vdk}. The general cases are done in \cite{central-k2f, central-ku2, central-k2} using Steinberg pro-groups. If the rank of \(\Phi\) is \(2\), then there are counterexamples for Chevalley groups \cite{rank-2-k2}. A relative version is proved in some cases by constructing a suitable van der Kallen's ``another presentation'' \cite{central-ko2, rel-st-symp, central-k2e}.

The approach with pro-groups is based on the following observation. For any \(k \in K\) we may construct the localization \(A_k\) and the corresponding group \(G(\Phi, A_k)\). Such a group is simpler than the original \(G(\Phi, A)\) since the colimit of such groups for \(k\) not in a prime ideal \(\mathfrak p \leq K\) is the group \(G(\Phi, A_{\mathfrak p})\) with a Gauss decomposition. On the other hand, \(G(\Phi, -)\) is a sheaf in the Zariski topology, so \(G(\Phi, A)\) may be reconstructed by various \(G(\Phi, A_k)\) with sufficiently large \(k\). Recall that in the linear and Chevalley cases \(A_k\) is the \(K\)-module module \(\varinjlim(A \xrightarrow k A \xrightarrow k \ldots)\) with a suitable multiplication

The localization does not work for Steinberg groups, so instead we consider the ``colocalization'' \(A^{(\infty, k)}\), it is a certain pro-algebra. In the linear and Chevalley cases \(A^{(\infty, k)}\) as a pro-\(K\)-module is the formal projective limit \(\varprojlim^{\Pro}(\ldots \xrightarrow k A \xrightarrow k A \xrightarrow k A)\) with a suitable multiplication. The limit of corresponding Steinberg pro-groups \(\stlin(\Phi, A^{(\infty, k)})\) with \(k\) not in a prime ideal \(\mathfrak p \leq K\) admits a natural action of \(G(\Phi, A_{\mathfrak p})\) due to the Gauss decomposition of the latter. Finally, the morphisms from \(\stlin(\Phi, A^{(\infty, k)})\) to the original \(\stlin(\Phi, A)\) allow to construct an action of \(G(\Phi, A)\) on \(\stlin(\Phi, A)\). The action turns out to be a crossed module structure, so in particular the kernel of \(\stlin(\Phi, A) \to G(\Phi, A)\) is central.

In this paper we prove the following results:
\begin{itemize}
\item Steinberg pro-groups \(\stlin(\Phi, A^{(\infty, k)})\), originally defined as formal projective limit of unrelativized Steinberg groups, are actually formal projective limits of relative Steinberg groups. In particular, they are crossed pro-modules over \(\stlin(\Phi, A)\).
\item For any crossed module \(\delta \colon X \to A\) (i.e. an ideal) there is a crossed square
\[\xymatrix@R=30pt@C=60pt@!0{
\stlin(\Phi, A, X) \ar[r] \ar[d] & \stlin(\Phi, A) \ar[d] \\
G(\Phi, X) \ar[r] & G(\Phi, A).
}\]
involving the relative Steinberg group. This is a generalization of the well-known standard commutator formulae for relative elementary groups \cite[\S8]{yoga}
\[[\mathrm E(\Phi, A, I), G(\Phi, A)] = [\mathrm E(\Phi, A), G(\Phi, I)] = \mathrm E(\Phi, A, I).\]
\item Steinberg pro-groups \(\stlin(\Phi, A^{(\infty, k)})\) form a sheaf of crossed pro-modules over \(\stlin(\Phi, A)\) and \(G(\Phi, A)\) in Zariski topology on the site of principal open subsets of \(\Spec(K)\). This implies that Steinberg pro-groups may be defined for all quasi-compact open subsets of \(\Spec(K)\) by the cosheaf property. Moreover, these pro-groups admit natural presentations as pro-groups in terms of Steinberg pro-groups corresponding to the elements of coverings of \(\Spec(K_k)\).
\item For any multiplicative subset \(S \subseteq K\) there is a natural action of \(G(\Phi, S^{-1} A)\) on \(\stlin(\Phi, A^{(\infty, S)})\). The case \(S = K \setminus \mathfrak p\) for a prime ideal \(\mathfrak p\) is used in the localization proofs of centrality of \(\mathrm K_2\), it is a generalization of the main ingredient from the localization-and-patching proof of the normality of the elementary subgroup \cite[\S4]{yoga}. We prove the general version using a variant of the cosheaf property.
\end{itemize}

\section{Groups with commutator relations}

We use the notation \([g, h] = ghg^{-1} h^{-1}\) and \(\up gh = ghg^{-1}\) for group operations. An action of a group \(G\) on a group \(H\) is also usually denoted by \((g, h) \mapsto \up gh\). By a \textit{root system} we mean a finite crystallographic irreducible root system, possibly non-reduced (i.e. of the type \(\mathsf{BC}_\ell\)). Recall that a subset \(\Sigma \subseteq \Phi\) of a root system is called \textit{special closed} if \(\Sigma\) is contained in an open half-space and \((\Sigma + \Sigma) \cap \Phi \subseteq \Sigma\). A \textit{saturated root subsystem} of \(\Phi\) is an intersection of \(\Phi\) with a subspace. If \(X \subseteq \Phi\) is contained in an open half-space, then \(\langle X \rangle\) denotes the smallest special closed subset of \(\Phi\) containing \(X\). The \textit{dimension} of a subset of \(\Phi\) is the dimension of its span.

Let us consider a group \(G\) with a family of homomorphisms \(t_\alpha \colon P_\alpha \to G\) from nilpotent groups \(P_\alpha\), where \(\alpha\) runs over a root system \(\Phi\) of rank \(\ell \geq 3\). Suppose that
\begin{itemize}
\item for any special closed subset \(\Sigma \subseteq \Phi\) the product map \(\prod_{\alpha \in \Sigma \setminus 2 \Sigma} P_\alpha \to G, (p_\alpha)_\alpha \mapsto \prod_\alpha t_\alpha(p_\alpha)\) is injective;
\item if \(\alpha, 2 \alpha \in \Phi\) (i.e. \(\Phi\) is of type \(\mathsf{BC}_\ell\) and \(\alpha\) is ultrashort), then \(P_{2 \alpha} \leq P_\alpha\) and \(t_{2\alpha}(p) = t_\alpha(p)\) for \(p \in P_{2 \alpha}\);
\item if \(\alpha\) and \(\beta\) are not anti-parallel, then the \textit{Chevalley commutator formula} \([t_\alpha(p), t_\beta(q)] = \prod_{\substack{i \alpha + j \beta \in \Phi \\ i, j > 0}} t_{i \alpha + j \beta}(f_{\alpha \beta i j}(p, q))\) holds, where \(f_{\alpha \beta i j} \colon P_\alpha \times P_\beta \to P_{i \alpha + j \beta}\) are some fixed maps.
\end{itemize}
In this case we say that \(G\) is a \textit{group with commutator relations of type} \(\Phi\), this is a modified definition form \cite[\S3]{st-jordan}. The group operations of \(P_\alpha\) are denoted by \(\dotplus\). Clearly, \(P_\alpha\) is abelian unless \(\alpha\) is an ultrashort root and \(\Phi\) is of type \(\mathsf{BC}_\ell\). In the exceptional case \(P_\alpha\) is two-step nilpotent with \([P_\alpha, P_\alpha]^\cdot \leq P_{2 \alpha}\) and \([P_\alpha, P_{2 \alpha}]^\cdot = \dot 0\).

Take a unital commutative ring \(K\). In this paper we study the following three classes of groups \(G(\Phi, A)\) with commutator relations of type \(\Phi\) constructed by various algebraic objects \(A\):
\begin{itemize}
\item Let \(A\) be an integral unital commutative \(K\)-algebra and \(\Phi\) be a reduced root system. Then \(G(\Phi, A) = G^{\mathrm{sc}}(\Phi, A)\) denotes the group of \(A\)-points of the simply connected Chevalley group scheme with the root system \(\Phi\). Here \(P_\alpha(A) = A\) for all \(\alpha \in \Phi\).
\item Let \(\Phi\) be of type \(\mathsf A_\ell\) and \(A\) be a locally finite generalized matrix algebra over \(K\), i.e. a unital associative \(K\)-algebra with a complete family of orthogonal idempotents \(e_1, \ldots, e_{\ell + 1}\) such that \(e_i\) are complete in the sense \(A = A e_i A\) and finitely generated subalgebra of \(A\) is finite. We associate with \(A\) the group \(G(\mathsf A_\ell, A) = \glin(A) = A^*\) with the root homomorphisms
\[t_{\mathrm e_i - \mathrm e_j} = t_{ij} \colon P_{\mathrm e_i - \mathrm e_j}(A) = e_i A e_j \to G(\mathsf A_\ell, A),\, p \mapsto 1 + p\]
for \(i \neq j\). In the case of matrix algebra \(A = \mat(n, K)\) we obtain the group \(G(\mathsf A_{\ell - 1}, K)\) from the first class.
\item Let \(\Phi\) be of type \(\mathsf{BC}_\ell\) and \(A = (R, \Delta)\) be a locally finite \textit{odd form \(K\)-algebra} with a \textit{strong orthogonal hyperbolic family} of rank \(\ell\) in the sense of \cite[\S4]{central-ku2} and \cite[\S3.3]{thesis}. Recall that an odd form \(K\)-algebra \((R, \Delta)\) consists of a non-unital associative \(K\)-algebra \(R\), an involution \(a \mapsto \inv a\) on \(R\), a group \(\Delta\) with group operation \(\dotplus\), group homomorphisms \(\phi \colon R \to \Delta\) and \(\pi \colon \Delta \to R\), a map \(\rho \colon \Delta \to R\), and a right action \((-) \cdot (=)\) of the multiplicative monoid of \(R \rtimes K\) on \(\Delta\) by group endomorphisms such that
\begin{align*}
\pi(\phi(a)) &= 0, &
\phi(a + \inv a) &= \phi(\inv aa k) = \dot 0,
\\
\rho(\phi(a)) &= a - \inv a, &
v \dotplus u &= u \dotplus \phi(\inv{\pi(u)} \pi(v)) \dotplus v,
\\
\phi(a) \cdot b &= \phi(\inv{\,b\,} ab), &
\rho(u \dotplus v) &= \rho(u) - \inv{\pi(u)} \pi(v) + \rho(v),
\\
\pi(u \cdot b) &= \pi(u) b, &
0 &= \rho(u) + \inv{\pi(u)} \pi(u) + \inv{\rho(u)},
\\
\rho(u \cdot b) &= \inv{\,b\,} \rho(u) b, &
u \cdot (b + b') &= u \cdot b \dotplus \phi(\inv{b'} \rho(u) b) \dotplus u \cdot b'.
\end{align*}
for \(u, v \in \Delta\), \(a \in R\), \(b, b' \in R \rtimes K\), \(k \in K\) (the involution on \(R \rtimes K\) is given by \(\inv{a \rtimes k} = \inv a \rtimes k\)). It is called locally finite if each finitely generated \(K\)-subalgebra of \(R\) is finite. A strong hyperbolic orthogonal family consists of idempotents \(e_i \in R\) and elements \(q_i \in \Delta\) for \(-\ell \leq i \leq -1\) and \(1 \leq i \leq \ell\) such that
\begin{align*}
e_i e_j &= 0, &
\inv{e_i} &= e_{-i}, &
e_i &\in R e_j R, \\
\pi(q_i) &= e_i, &
\rho(q_i) &= 0, &
q_i &= q_i \cdot e_i
\end{align*}
for \(i \neq j\). There is a natural \textit{unitary group} \(G(\Phi, A) = \unit(R, \Delta)\) with the homomorphisms
\begin{align*}
t_{\mathrm e_j - \mathrm e_i} = T_{ij} &\colon P_{\mathrm e_j - \mathrm e_i}(A) = e_i R e_j \to G(\Phi, A) \text{ for } i + j > 0 \text{ and } i \neq j; \\
t_{\mathrm e_j} = T_j &\colon P_{\mathrm e_j}(A) = \{u \in \Delta \cdot e_j \mid e_k \pi(u) = 0 \text{ for all } k\} \to G \text{ for } j \neq 0;
\end{align*}
and the restrictions \(t_{2 \mathrm e_j}\) of \(t_{\mathrm e_j}\) on \(P_{2 \mathrm e_j}(A) = \phi(e_{-j} R e_j) \leq P_{\mathrm e_j}(A)\).
\end{itemize}
The unitary groups in the third class generalize, in particular, orthogonal and symplectic groups, see \cite[\S2]{central-ku2} or \cite[\S1.1]{thesis}. In all cases the maps \(f_{\alpha \beta i j}\) may be expressed in terms of the operations on \(A\).

For such a group \(G(\Phi, A)\) we may define a corresponding \textit{Steinberg group} \(\stlin(\Phi, A)\) as the group generated by the elements \(x_\alpha(p)\) for \(\alpha \in \Phi\), \(p \in P_\alpha(A)\) satisfying the \textit{Steinberg relations}
\begin{itemize}
\item \(x_\alpha(p)\, x_\alpha(q) = x_\alpha(p \dotplus q)\);
\item if \(\alpha, 2 \alpha \in \Phi\) and \(p \in P_{2 \alpha}(A)\), then \(x_{2 \alpha}(p) = x_\alpha(p)\);
\item if \(\alpha\) and \(\beta\) are not anti-parallel, then \([x_\alpha(p), x_\beta(q)] = \prod_{\substack{i \alpha + j \beta \in \Phi \\ i, j > 0}} x_{i \alpha + j \beta}(f_{\alpha \beta i j}(p, q))\).
\end{itemize}
Clearly, \(\stlin(\Phi, A)\) together with the maps \(x_\alpha\) is also a group with commutator relations of type \(\Phi\). There is a canonical homomorphism \(\stmap \colon \stlin(\Phi, A) \to G(\Phi, A),\, x_\alpha(p) \mapsto t_\alpha(p)\). For any special closed subset \(\Sigma \subseteq \Phi\) the subgroup \(\stlin(\Sigma, A) = \langle x_\alpha(P_\alpha(A)) \rangle \leq \stlin(\Phi, A)\) injectively maps to \(G(\Phi, A)\) by definition.

We also need relative Steinberg groups. The natural setting to consider relative objects are \textit{algebraically coherent semi-abelian categories} \cite{alg-coh}. In every semi-abelian category \(\mathcal A\) there are notions of internal actions and crossed modules. \textit{Actions} of an object \(A\) of \(\mathcal A\) on an object \(X\) are the isomorphism classes of right split exact sequences \(\dot 0 \to X \to X \rtimes A \leftrightarrows A \to \dot 0\), the middle term is called the \textit{semi-direct product} (it is determined by an action up to the unique isomorphism). A \textit{precrossed module} in \(\mathcal A\) is a morphism \(\delta \colon X \to A\) together with an action of \(A\) on \(X\) such that \(\delta\) is equivariant with respect to this action (\(A\) acts on itself in a canonical way). A \textit{crossed module} is a precrossed module such that the canonical action of \(X\) on itself is the pullback of the action of \(A\) on \(X\).

A \textit{morphism} \((f, g)\) from a crossed module \(\delta \colon X \to A\) to a crossed module \(\delta \colon Y \to B\) consists of morphisms \(f \colon X \to Y\) and \(g \colon A \to B\) such that \(\delta \circ f = g \circ \delta\) and \(f\) is \(A\)-equivariant, where the action of \(A\) on \(Y\) is induced via \(g\).

There is a one-to-one correspondence between isomorphism classes of precrossed modules and \textit{reflexive graphs} in \(\mathcal A\), i.e. the tuples \((A, B, p_1, p_2, d)\), where \(p_i \colon B \to A\) are morphisms with a common section \(d\). Namely, \(X\) corresponds to \(\Ker(p_2)\) and \(\delta\) is induced by \(p_1\). The isomorphism classes of crossed modules correspond to \textit{internal categories} (or \textit{internal groupoids}), i.e. the reflexive graphs with composition morphisms \(\lim(B \xrightarrow{p_2} A \xleftarrow{p_1} B) \to B\) making \((\mathcal A(T, A), \mathcal A(T, B))\) categories for any object \(T\). Such a composition morphism is necessarily unique.

For example, in the category of groups a homomorphism \(\delta \colon X \to G\) is a crossed module if \(G\) acts on \(X\), \(\delta(\up gx) = \up g{\delta(x)}\), and the \textit{Peiffer identity} \(\up xy = \up{\delta(x)}y\) holds for all \(x, y \in X\).

From now on \(\mathcal A\) denotes one of the following algebraically coherent semi-abelian categories:
\begin{itemize}
\item In the Chevalley case \(\mathcal A\) is the category of all commutative \(K\)-algebras, not necessarily unital. In this category an action of \(A\) on \(X\) is a biadditive multiplication \(A \times X \to X\) making \(X\) an \(A\)-algebra. A homomorphism \(\delta \colon X \to A\) is a crossed module if \(A\) acts on \(X\), \(\delta(ax) = a \delta(x)\), and \(xy = \delta(x) y\) for all \(x, y \in X\).
\item In the linear case \(\mathcal A\) is the category of all associative \(K\)-algebras. Here an action of \(A\) on \(X\) is a pair of biadditive multiplications \(A \times X \to X\) and \(X \times A \to X\) making \(X\) an \(A\)-\(A\)-bimodule such that \((ax)y = a(xy)\), \((xa)y = x(ay)\), and \((xy)a = x(ya)\) for all \(x, y \in X\) and \(a \in A\). A homomorphism \(\delta \colon X \to A\) is a crossed module if \(A\) acts on \(X\), \(\delta(ax) = a \delta(x)\), \(\delta(xa) = \delta(x) a\), and \(xy = \delta(x) y = x \delta(y)\) for all \(x, y \in X\).
\item In the unitary case \(\mathcal A\) is the category of odd form \(K\)-algebras. Actions and crossed modules in this category are described in \cite[\S2]{central-ku2}, \cite[\S1.4]{thesis}, and \cite[\S2]{rel-stu}.
\end{itemize}

Objects \(A\) of \(\mathcal A\) from the definition of \(G(\Phi, A)\) above are called \textit{absolute} (i.e. if \(A\) is locally finite and has the identity, a family of idempotents, or a strong orthogonal hyperbolic family). The groups \(P_\alpha(-)\), \(G(\Phi, A)\), and \(\stlin(\Phi, A)\) are functors defined on the subcategory of absolute objects of \(\mathcal A\), where morphisms are the homomorphisms preserving the identity, the idempotents, or the strong orthogonal hyperbolic family respectively. All these functors commute with direct limits, \(P_\alpha(-)\) and \(G(\Phi, -)\) commutes with all finite limits, \(P_\alpha(-)\) and \(\stlin(\Phi, -)\) commutes with surjective homomorphisms.

Let \(\delta \colon X \to A\) be a crossed module over an absolute object in \(\mathcal A\), then \(X \rtimes A\) is also absolute. Since \(G(\Phi, -)\) preserves finite limits, we define \(G(\Phi, X)\) as the kernel of \(G(\Phi, X \rtimes A) \to G(\Phi, A)\), so it is naturally a crossed module over \(G(\Phi, A)\). Clearly, \(G(\Phi, X)\) has commutator relations of type \(\Phi\), the root homomorphisms are \(t_\alpha \colon P_\alpha(X) \to G(\Phi, X)\), and \(P_\alpha(X)\) are the kernels of \(P_\alpha(X \rtimes A) \to P_\alpha(A)\). We define the \textit{unrelativized Steinberg group} \(\stlin(\Phi, X)\) in the same way as \(\stlin(\Phi, A)\), i.e. as the group generated by the elements \(x_\alpha(a)\) for \(\alpha \in \Phi\), \(a \in P_\alpha(X)\) satisfying the Steinberg relations. It has subgroups \(\stlin(\Sigma, X)\) for special closed subsets \(\Sigma \subseteq \Phi\).

Recall that there are natural homomorphisms
\begin{align*}
p_1 \colon X \rtimes A \to A&,\enskip x \rtimes a \mapsto a, \\
p_2 \colon X \rtimes A \to A&,\enskip x \rtimes a \mapsto \delta(x) \dotplus a, \\
d \colon A \to X \rtimes A&,\enskip a \mapsto \dot 0 \rtimes a.
\end{align*}
They induce group homomorphisms \(p_{i*} \colon \stlin(\Phi, X \rtimes A) \to \stlin(\Phi, A)\) and \(d_* \colon \stlin(\Phi, A) \to \stlin(\Phi, X \rtimes A)\), so \(\stlin(\Phi, X \rtimes A) = \Ker(p_{1*}) \rtimes \stlin(\Phi, A)\). A \textit{relative Steinberg group} \cite{rel-k2-keune, rel-k2-loday} is
\[\stlin(\Phi, A, X) = \frac{\Ker(p_{1*})}{[\Ker(p_{1*}), \Ker(p_{2*})]}.\]
It is a crossed module over \(\stlin(\Phi, A)\) (this is why we factor by the commutator). There is a natural homomorphism \(\stlin(\Phi, X) \to \stlin(\Phi, A, X)\), so the relative Steinberg group also has commutator relations of type \(\Phi\).

It is possible to give a presentation of \(\stlin(\Phi, A, X)\) as an abstract group as follows. For any root \(\alpha \in \Phi\) a \textit{thick \(\alpha\)-series} is a set \(\Phi \cap (\mathbb R_{> 0} \beta + \mathbb R \alpha)\) for a root \(\beta \in \Phi\) linearly independent with \(\alpha\). Clearly, \(\Phi \setminus \mathbb R \alpha\) is a disjoint union of thick \(\alpha\)-series, all of them are saturated closed subsets. A thick \(\alpha\)-series \(\Sigma\) has dimension \(1\) or \(2\), in the first case \(\Sigma = \langle \beta \rangle\), where \((\alpha, \beta)\) is a base of a saturated root subsystem of type \(\mathsf A_1 \times \mathsf A_1\) (or \(\mathsf A_1 \times \mathsf{BC}_1\) if \(\Phi\) is of type \(\mathsf{BC}_\ell\)). In the two-dimensional case the line \(\mathbb R \alpha\) is uniquely determined by \(\Sigma\). For any thick \(\alpha\)-series \(\Sigma\) the groups \(\stlin(\langle \pm \alpha \rangle, A)\) normalize \(\stlin(\Sigma, X)\).

\begin{lemma} \label{explicit-presentation}
Let \(X\) be a crossed module over \(A\). Then \(\stlin(\Phi, A, X)\) is the abstract group generated by the elements \(z_\alpha(a, p) = \up{x_{-\alpha}(p)}{x_\alpha(a)}\) for \(a \in P_\alpha(X)\), \(p \in P_{-\alpha}(A)\), \(\alpha \in \Phi \setminus 2 \Phi\) and the elements \(z_\Sigma(g, h) = \up hg\) for \(g \in \stlin(\Sigma, X)\), \(h \in \stlin(-\Sigma, A)\), and a two-dimensional thick \(\alpha\)-series \(\Sigma \subseteq \Phi\). The relations are
\begin{itemize}
\item \(z_\alpha(a \dotplus b, p) = z_\alpha(a, p)\, z_\alpha(b, p)\);
\item \(z_\Sigma(gg', h) = z_\Sigma(g, h)\, z_\Sigma(g', h)\);
\item \(z_\Sigma(x_\alpha(a), x_{-\alpha}(p)) = z_\alpha(a, p)\) for \(\alpha \in \Sigma\);
\item \([z_\alpha(a, p), z_\beta(b, q)] = 1\) for a thick \(\alpha\)-series \(\mathbb R_{> 0} \beta\);
\item \(\up{z_\alpha(a, p)}{z_\Sigma(g, h)} = z_\Sigma\bigl(\up{t_{-\alpha}(p)\, t_\alpha(\delta(a))\, t_{-\alpha}(\dotminus p)}g, \up{t_{-\alpha}(p)\, t_\alpha(\delta(a))\, t_{-\alpha}(\dotminus p)}h\bigr)\) for a thick \(\alpha\)-series \(\Sigma\);
\item \(z_{\langle \alpha, \beta \rangle \setminus \mathbb R \alpha}\bigl(\up{t_\alpha(p)}g, \up{t_\alpha(p)}{(h\, x_\beta(q))}\bigr) = z_{\langle \alpha, \beta \rangle \setminus \mathbb R \beta}\bigl(\up{t_\beta(q)}g, x_\alpha(p)\, h\bigr)\) for a base \((\alpha, \beta)\) of a two-dimensional indecomposable saturated root subsystem of \(\Phi\), \(g \in \stlin(\langle \alpha, \beta \rangle \setminus (\mathbb R \alpha \cup \mathbb R \beta), X)\), \(h \in \stlin(\langle \alpha, \beta \rangle \setminus (\mathbb R \alpha \cup \mathbb R \beta), A)\);
\item \(z_\alpha(a, p \dotplus \delta(b)) = \up{z_{-\alpha}(b, \dot 0)}{z_\alpha(a, p)}\).
\end{itemize}
\end{lemma}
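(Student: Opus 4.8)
The plan is to show that the abstract group $F$ defined by the displayed presentation is isomorphic to the relative group $\stlin(\Phi, A, X) = \Ker(p_{1*})/[\Ker(p_{1*}), \Ker(p_{2*})]$. Each $z_\alpha(a,p)$ and $z_\Sigma(g,h)$ names an honest element of the relative group, and one checks directly that all seven relation families are consequences of the Steinberg relations in $\stlin(\Phi, X \rtimes A)$ together with the vanishing of $[\Ker(p_{1*}), \Ker(p_{2*})]$ in the quotient: the first three and the fourth are formal, the fifth is the Peiffer-type statement that conjugation by $z_\alpha(a,p)$ equals the action of its boundary, the sixth records the compatibility of the two ways of collecting inside a rank-two saturated subsystem, and the seventh is precisely the identity forced by killing $[\Ker(p_{1*}), \Ker(p_{2*})]$. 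This yields a homomorphism $\psi \colon F \to \stlin(\Phi, A, X)$, and the whole task is to invert it. I will also use the one elementary generation fact: from the splitting $\stlin(\Phi, X \rtimes A) = \Ker(p_{1*}) \rtimes d_*(\stlin(\Phi, A))$, the normal subgroup $N = \Ker(p_{1*})$ is generated by the conjugates $\up{d_*(h)}{x_\alpha(a)}$ with $h \in \stlin(\Phi, A)$ and $a \in P_\alpha(X)$.

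Next I would turn $F$ into a crossed module over $\stlin(\Phi, A)$. The action of a root element $x_\beta(q)$ on the generators is essentially dictated by the presentation: on $z_\alpha$ it is the shift $\up{x_{-\alpha}(q)}{z_\alpha(a,p)} = z_\alpha(a, q \dotplus p)$ when $\beta = -\alpha$, on $z_\Sigma$ it is incorporated through the normalization of $\stlin(\Sigma, X)$ by $\stlin(\langle \pm\alpha\rangle, A)$, and in the remaining cases it is read off from the Chevalley commutator formula. The work is to verify that this prescription respects every relation family, so that it defines a genuine action, and then to set $\partial_F(z_\alpha(a,p)) = \up{x_{-\alpha}(p)}{x_\alpha(\delta(a))}$ and check equivariance together with the Peiffer identity $\up f{f'} = \up{\partial_F(f)}{f'}$ for $f, f' \in F$; relations five and seven are exactly what make these hold. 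Once $F$ is a crossed module, the semidirect product $F \rtimes \stlin(\Phi, A)$ is an internal groupoid with projections $q_1, q_2$, and by the correspondence recalled in the text the Peiffer commutator vanishes, i.e. $[\Ker q_1, \Ker q_2] = 1$.

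To build the inverse I would define $\Theta \colon \stlin(\Phi, X \rtimes A) \to F \rtimes \stlin(\Phi, A)$ on root generators by decomposing $c \in P_\gamma(X \rtimes A) = P_\gamma(X) \rtimes P_\gamma(A)$ as $c = a \dotplus p$ and setting $\Theta(x_\gamma(c)) = (z_\gamma(a, \dot 0), x_\gamma(p))$, so that $q_1 \circ \Theta = p_{1*}$ and, because $\partial_F(z_\gamma(a,\dot0)) = x_\gamma(\delta(a))$, also $q_2 \circ \Theta = p_{2*}$. After verifying that $\Theta$ respects the Steinberg relations it follows that $\Theta(N) \subseteq \Ker q_1 = F$ and $\Theta(\Ker p_{2*}) \subseteq \Ker q_2$, whence $\Theta$ annihilates $[\Ker(p_{1*}), \Ker(p_{2*})]$ and induces $\bar\phi \colon \stlin(\Phi, A, X) \to F$. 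Finally $\bar\phi$ and $\psi$ are mutually inverse: $\bar\phi\psi = \id_F$ is checked on the generators $z_\alpha(a,p)$ and $z_\Sigma(g,h)$ using the defining shift formulas for the action, while $\psi\bar\phi = \id$ follows on the generators $\up{d_*(h)}{x_\alpha(a)}$ of $\stlin(\Phi, A, X)$ because $\bar\phi(\up{d_*(h)}{x_\alpha(a)}) = \up h{z_\alpha(a,\dot0)}$ and $\psi$ is $\stlin(\Phi, A)$-equivariant.

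The main obstacle is the well-definedness of the $\stlin(\Phi, A)$-action on $F$ and of $\Theta$ against the Chevalley commutator formula. Since $\ell \geq 3$, the standard reduction lets me test each such identity one two-dimensional saturated subsystem at a time; expanding a commutator by collecting along $\alpha$ first versus along $\beta$ first produces two words in the generators, and their equality is exactly relation six, the discrepancy between the $A$-argument of a $z_\alpha$ and the boundary $\delta$ of an $X$-element being absorbed by relation seven. I would dispose of the reduced configurations (types $\mathsf A$, $\mathsf C$, $\mathsf D$) first and then carry the extra bookkeeping of type $\mathsf{BC}$, where $P_\alpha$ is only two-step nilpotent for ultrashort $\alpha$ and the generator $z_{2\alpha}$ of the long root must be matched with $z_\alpha$ by hand.
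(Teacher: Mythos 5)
Your architecture is sound and is in fact the strategy of the sources: the paper itself does not prove this lemma but cites \cite[theorems 1, 2]{rel-st} and \cite[theorems 2, 3]{rel-stu}, and those references proceed exactly as you propose --- check the relations in \(\Ker(p_{1*})/[\Ker(p_{1*}),\Ker(p_{2*})]\) to get \(\psi\colon F\to\stlin(\Phi,A,X)\), make \(F\) a crossed module over \(\stlin(\Phi,A)\), and invert \(\psi\) through the semidirect product. Your identification of which relations are responsible for what (relation seven as the Peiffer commutator \(\up{x_{-\alpha}(b)}{y}\equiv\up{d_*(x_{-\alpha}(\delta(b)))}{y}\), relation six as the two collection orders in a rank-two subsystem, relation five as conjugation through the boundary) is correct, as is the generation of \(\Ker(p_{1*})\) by the elements \(\up{d_*(h)}{x_\alpha(a)}\) and the verification that \(\bar\phi\) and \(\psi\) are mutually inverse on generators.

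The gap is that the step you flag as ``the work'' is not a verification one can wave at: it is essentially the entire content of the cited theorems, and your outline does not even fully specify the object to be verified. Concretely, you never define \(\up{x_\beta(q)}{z_\Sigma(g,h)}\) when \(\beta\) lies outside \(\pm\mathbb R\alpha\) for the line \(\mathbb R\alpha\) attached to the thick series \(\Sigma\); the Chevalley commutator formula does not apply directly to the conjugated element \(\up hg\), so one must first rewrite \(z_\Sigma(g,h)\) via relations two, three and six into generators adapted to a thick \(\beta\)-series and only then apply the shift. The well-definedness of the result --- independence of the rewriting path, compatibility with all seven relation families, and the fact that the resulting assignments on free generators \(x_\beta(q)\) themselves satisfy the Steinberg relations so that the action descends to \(\stlin(\Phi,A)\) --- is a long case-by-case analysis over the two-dimensional saturated subsystems (\(\mathsf A_1\times\mathsf A_1\), \(\mathsf A_2\), \(\mathsf B_2/\mathsf{BC}_2\), \(\mathsf G_2\) is excluded only because \(\ell\geq 3\) forbids it as a \emph{saturated} subsystem in the relevant types), and the same analysis is needed again to check that \(\Theta\) respects the commutator formula after splitting \(c=a\dotplus p\). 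Until that is carried out (or explicitly delegated to \cite{rel-st, rel-stu}, which is what the paper does), the proposal is a correct plan rather than a proof.
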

\begin{proof}
This is \cite[theorem 1]{rel-st} for general linear groups, \cite[theorem 2]{rel-st} for simply-laced Chevalley groups, \cite[theorem 2]{rel-stu} for odd unitary groups, and \cite[theorem 3]{rel-stu} for doubly-laced Chevalley groups.
\end{proof}

\section{Steinberg pro-groups}

The groups \(G(\Phi, A)\) are actually sheaves in the Zariski topology in the following sense. Take elements \(k_1, \ldots, k_n \in K\) generating the unit ideal (i.e. a Zariski covering of \(\mathrm{Spec}(K)\) by principal open subschemes \(\mathrm{Spec}(K_{k_i})\)). Any absolute object \(A\) in \(\mathcal A\) has natural localizations \(A_{k_i}\), for odd form algebras see \cite[\S3]{central-ku2} or \cite[\S2.2]{thesis} for details. Then \(G(\Phi, A)\) is the limit of the diagram \(G(\Phi, A_{k_i}) \rightrightarrows G(\Phi, A_{k_i k_j})\) in the category of groups. Similarly, all \(P_\alpha(A)\) are also sheaves in the Zariski topology. Unfortunately, the Steinberg groups \(\stlin(\Phi, A)\) are not Zariski sheaves in general.

Recall that any category \(\mathcal C\) admits a \textit{pro-completion} \(\Pro(\mathcal C)\). Objects of \(\Pro(\mathcal C)\) are \textit{formal projective limits} \(\varprojlim^{\Pro}_i X_i\) of \textit{inverse systems} \((X_i)_i\) in \(\mathcal C\), i.e. contravariant functors from small filtered categories such as \(\mathbb N = (0 \to 1 \to \ldots)\) to \(\mathcal C\). Morphisms in \(\Pro(\mathcal C)\) are given by
\[\Pro(\mathcal C)\bigl(\varprojlim\nolimits_i^{\Pro} X_i, \varprojlim\nolimits^{\Pro}_j Y_j\bigr) = \varprojlim\nolimits_j \varinjlim\nolimits_i \mathcal C(X_i, Y_j),\]
where the limit and the colimit in the right hand side are taken in the category of sets. If \(X = \varprojlim^{\Pro}_i X_i\) is a pro-object with the index category \(\mathcal I\) and \(u \colon \mathcal J \to \mathcal I\) is a cofinal functor, then the canonical morphism \(X \to \varprojlim^{\Pro}_j X_{u(j)}\) is an isomorphism in \(\Pro(\mathcal C)\). The category \(\mathcal C\) embeds into \(\Pro(\mathcal C)\) in a natural way, i.e. the functor \(\mathcal C \to \Pro(\mathcal C), X \mapsto X\) is fully faithful.

Since the category of pro-sets \(\Pro(\Set)\) is cartesian, it is useful to denote various composite morphisms in \(\Pro(\Set)\) using first-order terms. For example, \([x, y]\) denotes the commutator morphism \(G \times G \to G\) for any group object \(G\) (or the additive commutator morphism of a ring object depending on the context), and \(f(g(x), h(y)) = f(h(x), g(y))\) for morphisms \(g, h \colon X \to Y\), \(f \colon Y \times Y \to Z\) means that the composite morphisms \(f \circ (g \times h)\) and \(f \circ (h \times g)\) coincide. We write \(x \in X\) to denote that a pro-set \(X\) is the domain of a formal variable \(x\) in a first-order term.

There is a natural ``forgetful'' functor from \(\Pro(\Group)\) to the category of group objects of pro-sets, it is fully faithful: a morphism \(f \colon G \to H\) of pro-groups considered as pro-sets is a morphism of pro-groups if and only if \(f(xy) = f(x)\, f(y)\) for \(x, y \in G\) (i.e. an equality holds between the corresponding morphisms \(G \times G \to H\) of pro-sets). Similarly, the ``forgetful'' functor from \(\Pro(\mathcal A)\) to the category of \(\mathcal A\)-objects in \(\Pro(\Set)\) is fully faithful.

The categories \(\Pro(\mathcal A)\) and \(\Pro(\Group)\) are algebraically coherent semi-abelian by \cite[\S1.9]{pro-semiabelian}. Actions in these categories are the same as ``ordinary'' actions of the corresponding algebraic objects in \(\Pro(\Set)\) by \cite[theorems 1, 5, 6]{pro-actions} and \cite[theorem 4]{thesis}, e.g. the actions of a pro-object \(G\) on a pro-object \(X\) are given by a family of actions of \(\Pro(\Set)(T, G)\) on \(\Pro(\Set)(T, X)\) natural on the pro-set \(T\). An object \(G\) from \(\mathcal A\) may act on an object \(X\) from \(\Pro(\mathcal A)\) in two ways: as a pro-object (inside \(\Pro(\mathcal A)\)) or as a family of actions of \(G\) on \(\Pro(\Set)(T, X)\) natural on \(T\). In the first case we say that the action is \textit{strong}, in the second case the action is called \textit{weak}. Every strong action induces a corresponding weak action. For example, a strong action of a group \(G\) on a pro-group \(X\) is a morphism \(a \colon G \times X \to X\) of pro-sets such that \(a(gh, x) = a(g, a(h, x))\) and \(a(g, xy) = a(g, x)\, a(g, y)\) for \(g, h \in G\) and \(x, y \in X\). On the other hand, a weak action of \(G\) on \(X\) is a group homomorphism \(G \to \Aut(X)\).

Let \(\mathcal K\) be the following category. Its objects are elements of \(K\), morphisms from \(k\) to \(k'\) are elements \(k''\) such that \(k' = k k''\), the identity morphisms are given by \(1\), and the composition coincides with the multiplication. Clearly, the localization of \(\mathcal K\) by the morphisms \(k^t \colon k \to k^{t + 1}\) is equivalent to the opposite poset of principal open subsets of \(\Spec(K)\), the family of these morphisms satisfies a suitable Ore condition. For every \(k \in K\) there is a functor
\begin{align*}
k^{(-)} \colon \mathbb N \to \mathcal K,\, n &\mapsto k^n, \\
(n \leq m) &\mapsto (k^{m - n} \colon k^n \to k^m).
\end{align*}

For any \(s \in K\) we construct the \textit{\(s\)-homotope} \(\delta \colon A^{(s)} \to A\) (as a crossed module in \(\mathcal A\)) as follows:
\begin{itemize}
\item In the Chevalley and linear cases \(A^{(s)} = \{a^{(s)} \mid a \in A\}\) with the operations \(a^{(s)} + b^{(s)} = (a + b)^{(s)}\), \(\delta\bigl(a^{(s)}\bigr) = as\), \(a b^{(s)} = a^{(s)} b = (ab)^{(s)}\), \(a^{(s)} b^{(s)} = (abs)^{(s)}\).
\item In the unitary case the construction is given in \cite[\S3.2]{thesis}. Namely, if \(A = (R, \Delta)\), then \(R^{(s)}\) is the corresponding homotope of \(R\) as a \(K\)-algebra and \(\Delta^{(s)}\) is the abstract group generated by \(\phi(a^{(s)})\) and \(u^{(s)}\) for \(a \in R\), \(u \in \Delta\) with the following relations: \(\phi \colon R^{(s)} \to \Delta^{(s)}\) is a homomorphism with central image, \((u \dotplus v)^{(s)} = u^{(s)} \dotplus v^{(s)}\), \(\phi(r)^{(s)} = \phi\bigl((rs)^{(s)}\bigr)\), \(\phi\bigl((r + \inv r)^{(s)}\bigr) = \phi\bigl((\inv rrk)^{(s)}\bigr) = \dot 0\). The operations are given by
\begin{align*}
\inv{r^{(s)}} &= {\inv r}^{(s)}, &
\pi\bigl(u^{(s)}\bigr) &= \pi(u)^{(s)}, &
\rho\bigl(u^{(s)}\bigr) &= \bigl(\rho(u) s\bigr)^{(s)}, \\
u^{(s)} \cdot r^{(s)} &= (u \cdot rs)^{(s)}, &
\delta\bigl(u^{(s)}\bigr) &= \delta(u) \cdot s, &
u \cdot r^{(s)} &= u^{(s)} \cdot r = (u \cdot r)^{(s)}.
\end{align*}
\end{itemize}

The homotope construction is a contravariant functor from \(\mathcal K\) to the category of crossed modules over \(A\). Namely, if \(s'' \colon s \to s'\) is a morphism in \(\mathcal K\), then the corresponding homomorphism \(A^{(s')} \to A^{(s)}\) is given by \(a^{(s')} \mapsto (as'')^{(s)}\) and \(u^{(s')} \mapsto (u \cdot s'')^{(s)}\).

The \textit{colocalization} of an object \(A\) of \(\mathcal A\) at a multiplicative subset \(S \subseteq K\) is the formal projective limit \(A^{(\infty, S)} = \varprojlim_{s \in \mathcal S} A^{(s)}\) in \(\Pro(\mathcal A)\) (or in the pro-completion of the category of crossed modules over \(A\)), where the objects of \(\mathcal S \subseteq \mathcal K\) are the elements of \(S\) and the morphisms \(s \to s'\) are \(s'' \in S\) with the property \(s' = s s''\). For any \(k \in K\) we also define the colocalization \(A^{(\infty, k)} = \varprojlim_{n \in \mathbb N} A^{(k^n)}\). It is easy to see that
\begin{itemize}
\item If \(S = \{1, k, k^2, \ldots\}\), then the canonical morphism \(A^{(\infty, S)} \to A^{(\infty, k)}\) is an isomorphism even if not all \(k^n\) are distinct. Indeed, it is given by the precomposition with the cofinal functor \(k^{(-)} \colon \mathbb N \to \mathcal S\).
\item For any multiplicative subset \(S \subseteq K\) we have \(A^{(\infty, S)} \cong \varprojlim_{s \in \mathcal S} A^{(\infty, s)}\), where the projective limit is taken in \(\Pro(\mathcal A)\). Here the right hand side may be written as \(\varprojlim_{(s, n) \in \mathcal S \times \mathbb N}^{\Pro} A^{(s^n)}\), it is given by the precomposition with the cofinal functor \((-)^{(=)} \colon \mathcal S \times \mathbb N \to \mathcal S\).
\item For any \(k \in K\) and \(n > 0\) the canonical morphism \(A^{(\infty, k^n)} \to A^{(\infty, k)}\) induced by \(k^{n - 1} \colon k \to k^n\) is an isomorphism. Its inverse is given by the precomposition with the multiplication by \(n\) on the index category \(\mathbb N\).
\end{itemize}

The pro-groups \(G\bigl(\Phi, A^{(\infty, k)}\bigr)\) and \(G\bigl(\Phi, A^{(\infty, S)}\bigr)\) are well-defined as the formal projective limits of the corresponding relative groups, they have the root morphisms \(t_\alpha \colon P_\alpha\bigl(A^{(\infty, k)}\bigr) \to G\bigl(\Phi, A^{(\infty, k)}\bigr)\) and \(t_\alpha \colon P_\alpha\bigl(A^{(\infty, S)}\bigr) \to G\bigl(\Phi, A^{(\infty, S)}\bigr)\) of pro-groups satisfying the definition of group with commutator relations, but with injective homomorphisms replaced by monomorphisms. Recall that monomorphisms in \(\Pro(\Group)\) coincide with formal projective limits of injective group homomorphisms.

The corresponding Steinberg pro-groups \(\stlin\bigl(\Phi, A^{(\infty, k)}\bigr)\) and \(\stlin\bigl(\Phi, A^{(\infty, S)}\bigr)\) may be defined as in \cite[\S2.4]{central-k2f}, \cite[\S5]{central-ku2}, \cite[\S4]{central-k2}, i.e. as the formal projective limits of unrelativized Steinberg groups over homotopes of \(A\). Alternatively, these pro-groups may be given by ``generators and relations'' in the following sense. Suppose that we have pro-sets \(X\), \(Y_1\), \ldots, \(Y_m\) and morphisms of pro-sets \(g_{ij} \colon Y_i \to X\) for \(1 \leq j \leq n_i\). We say that a pro-group \(G\) together with a morphism of pro-sets \(f \colon X \to G\) is \textit{given by generators} \(X\) \textit{and relations} \(\prod_{j = 1}^{n_i} g_{ij}(y)^{\eps_{ij}} = 1\) for \(1 \leq i \leq m\) and some \(\eps_{ij} \in \{-1, 1\}\) if
\begin{itemize}
\item these relations hold in \(G\), i.e. \(\prod_{j = 1}^{n_i} f(g_{ij}(y))^{\eps_{ij}} = 1\) as the corresponding morphisms \(Y_i^{n_i} \to G\) of pro-sets;
\item the pair \((G, f)\) satisfies the universal property in the following sense: for any pro-set \(P\) and a morphism \(f' \colon P \times X \to G'\) of pro-sets satisfying \(\prod_{j = 1}^{n_i} f'(p, g_{ij}(y))^{\eps_{ij}} = 1\) for all \(i\) there is a unique \(h \colon P \times G \to G'\) such that \(h(p, xy) = h(p, x) h(p, y)\) and \(f'(p, x) = h(p, f(x))\).
\end{itemize}
The factor \(P\) in the second condition is needed since \(\Pro(\Set)\) is not closed. The next easy lemma is proved in \cite[\S3.1]{thesis}.

\begin{lemma} \label{universal-pro-group}
Let \(X\), \(Y_1\), \ldots, \(Y_m\) be pro-sets, \(g_{ij} \colon Y_i \to X\) be morphisms of pro-sets for \(1 \leq j \leq n_i\), and \(\eps_{ij} \in \{-1, 1\}\) be integers. Then there exists a pro-group generated by \(X\) with the relations \(\prod_{j = 1}^{n_i} g_{ij}(y)^{\eps_{ij}} = 1\), necessarily unique up to a unique isomorphism. Moreover, if the diagram of \(X\), \(Y_i\), \(g_{ij}\) is the formal projective limit of the corresponding diagrams in \(\Set\), then the required pro-group is the formal projective limit of the groups with the corresponding presentations.
\end{lemma}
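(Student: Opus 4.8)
The plan is to obtain uniqueness formally and existence by reduction to a levelwise situation, which simultaneously yields the ``moreover'' clause. Uniqueness up to unique isomorphism is the usual Yoneda-style argument: if $(G, f)$ and $(\tilde G, \tilde f)$ both satisfy the stated universal property, apply the property of $G$ with $P$ the terminal pro-set and $f' = \tilde f$ (which satisfies the relations) to get a comparison morphism $G \to \tilde G$, and symmetrically $\tilde G \to G$; the uniqueness clause in the universal property forces the two composites to be the respective identities. So the real work is existence.

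First I would reduce to the case covered by the ``moreover''. By the standard reindexing (normalization) lemma for finite diagrams in a pro-category, the finite diagram consisting of $X$, the $Y_i$, and the morphisms $g_{ij}$ is isomorphic in $\Pro(\Set)$ to the formal projective limit of a diagram of the same shape living levelwise over one common small filtered index category $\mathcal I$. Thus I may assume $X = \varprojlim^{\Pro}_{\iota} X_\iota$, $Y_i = \varprojlim^{\Pro}_{\iota} Y_{i,\iota}$, and $g_{ij} = \varprojlim^{\Pro}_{\iota} g_{ij,\iota}$ for honest set maps $g_{ij,\iota} \colon Y_{i,\iota} \to X_\iota$ compatible with the transition maps. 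Since this is exactly the hypothesis of the ``moreover'' part, establishing existence here proves both assertions at once. For each $\iota$ let $G_\iota$ be the ordinary group presented by generators $X_\iota$ and relations $\prod_{j=1}^{n_i} g_{ij,\iota}(y)^{\eps_{ij}} = 1$ for $y \in Y_{i,\iota}$ and $1 \le i \le m$, with tautological map $f_\iota \colon X_\iota \to G_\iota$. A transition map $X_{\iota'} \to X_\iota$ carries relator words to relator words because it is compatible with the $g_{ij,\iota}$, so functoriality of group presentations induces a homomorphism $G_{\iota'} \to G_\iota$; hence $(G_\iota)_\iota$ is an inverse system and I set $G = \varprojlim^{\Pro}_{\iota} G_\iota$, $f = \varprojlim^{\Pro}_{\iota} f_\iota$. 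The relations hold in $G$ because they hold in each $G_\iota$ by construction.

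It then remains to verify the universal property. Given a pro-set $P$, a pro-group $G' = \varprojlim^{\Pro}_{l} G'_l$, and a morphism $f' \colon P \times X \to G'$ with $\prod_{j} f'(p, g_{ij}(y))^{\eps_{ij}} = 1$, I would work level by level. After arranging $P$ and $X$ to be indexed by the same filtered category, the hom-formula reads $\Pro(\Set)(P \times X, G') = \varprojlim_{l} \varinjlim_{\iota} \Set(P_\iota \times X_\iota, G'_l)$, so for every $l$ the morphism $f'$ is represented by a set map $f'_l \colon P_\iota \times X_\iota \to G'_l$ at a sufficiently deep index $\iota = \iota(l)$, and the hypothesis on $f'$ says that, after a further refinement, $\prod_j f'_l(p, g_{ij,\iota}(y))^{\eps_{ij}} = 1$ holds as a genuine equality of set maps. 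Fixing $p \in P_\iota$, the map $x \mapsto f'_l(p,x)$ kills precisely the relators of $G_\iota$, so the ordinary universal property of the presented group $G_\iota$ furnishes a unique homomorphism $G_\iota \to G'_l$ extending it; letting $p$ vary gives a set map $h_l \colon P_\iota \times G_\iota \to G'_l$ which is a homomorphism in the second variable. These $h_l$ assemble into the required $h \colon P \times G \to G'$, and $h$ is unique because each $G_\iota$ is generated by the image of $X_\iota$, so $h$ is already pinned down on generators by the equation $f'(p,x) = h(p, f(x))$.

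The hard part is purely the bookkeeping in this last step. Because morphisms of pro-sets into $G'$ exist only ``eventually'' (the colimit $\varinjlim_\iota$ in the hom-formula), one must descend to indices deep enough that $f'$, the $g_{ij}$, and the relation identity are all simultaneously represented by honest set maps at a common level, and then check that the level homomorphisms $h_l$ are compatible under the transition maps of $G'$ and independent of the chosen representatives, so that they genuinely define a morphism in $\Pro(\Set)$. I expect this to be routine filtered-colimit manipulation once the reindexing is in place, with no group-theoretic difficulty beyond the universal property of ordinary presented groups; this is why the lemma is stated as easy.
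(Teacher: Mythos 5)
Your argument is correct, and it is the natural one: the paper itself does not prove this lemma but defers to \cite[\S 3.1]{thesis}, where the construction is exactly the one you give (reindex the finite diagram of \(X\), \(Y_i\), \(g_{ij}\) over a common cofiltered index category, take the levelwise presented groups, and verify the universal property by descending \(f'\) and the relation identities to representatives at a sufficiently deep common level), as is already forced by the ``moreover'' clause. The only steps deserving explicit care are the ones you flag --- the reindexing lemma for finite loopless diagrams in a pro-category and the compatibility of the level maps \(h_l\) under the transition maps of \(G'\), the latter following because each \(G_\iota\) is generated by the image of \(X_\iota\) --- so there is no gap.
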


It follows that \(\stlin\bigl(\Phi, A^{(\infty, k)}\bigr)\) and \(\stlin\bigl(\Phi, A^{(\infty, S)}\bigr)\) may be defined as the pro-groups generated by \(P_\alpha\bigl(A^{(\infty, k)}\bigr)\) and \(P_\alpha\bigl(A^{(\infty, S)}\bigr)\) (i.e. by their coproduct in \(\Pro(\Set)\)) satisfying the Steinberg relations. By the universal property, there are pro-group morphisms \(\stmap \colon \stlin\bigl(\Phi, A^{(\infty, k)}\bigr) \to G\bigl(\Phi, A^{(\infty, k)}\bigr)\) such that \(\stmap(x_\alpha(p)) = t_\alpha(p)\).

Clearly, \(A^{(\infty, k)}\), \(P_\alpha\bigl(A^{(\infty, k)}\bigr)\), \(G\bigl(\Phi, A^{(\infty, k)}\bigr)\), and \(\stlin\bigl(\Phi, A^{(\infty, k)}\bigr)\) form copresheaves of pro-\(\mathcal A\)-objects or pro-groups on the site of principal open subsets of \(\Spec(K)\) with Zariski topology. If \(\mathfrak p \leqt K\) is a prime ideal, then we write \((\infty, \mathfrak p)\) instead of \((\infty, K \setminus \mathfrak p)\) in the upper indices. The costalks of these copresheaves are \(A^{(\infty, \mathfrak p)}\), \(P_\alpha\bigl(A^{(\infty, \mathfrak p)}\bigr)\), \(G\bigl(\Phi, A^{(\infty, \mathfrak p)}\bigr)\), and \(\stlin\bigl(\Phi, A^{(\infty, \mathfrak p)}\bigr)\) respectively.

\begin{lemma} \label{root-elimination}
Let \(\alpha \in \Phi\) be a root, \(S \subseteq K\) be a multiplicative subset, and \(\stlin\bigl(\Phi \setminus \alpha, A^{(\infty, S)}\bigr)\) be the pro-group with the same presentation as \(\stlin\bigl(\Phi, A^{(\infty, S)}\bigr)\), but without the generators \(x_\beta(p)\) for \(\beta \parallel \alpha\) and the relations
\begin{itemize}
\item \(x_\beta(p \dotplus q) = x_\beta(p)\, x_\beta(q)\) for \(\beta \parallel \alpha\);
\item \(x_{2 \beta}(p) = x_\beta(p)\) for \(\beta \parallel \alpha\);
\item \([x_\beta(p), x_\gamma(q)] = \prod_{\substack{i \beta + j \gamma \in \Phi\\ i, j > 0}} x_{i \beta + j \gamma}(f_{\beta \gamma i j}(p, q))\) for \(\alpha \in \mathbb R_{\geq 0} \beta + \mathbb R_{\geq 0} \gamma\).
\end{itemize}
Then the canonical homomorphism \(\stlin\bigl(\Phi \setminus \alpha, A^{(\infty, S)}\bigr) \to \stlin\bigl(\Phi, A^{(\infty, S)}\bigr)\) is an isomorphism. If \(\alpha, \beta \in \Phi\) are linearly independent, then \(\stlin\bigl(\Phi, A^{(\infty, S)}\bigr)\) as a pro-group is generated by \(x_\gamma\bigl(P_\gamma\bigl(A^{(\infty, S)}\bigr)\bigr)\) for \(\gamma \notin \mathbb R \alpha + \mathbb R \beta\).
\end{lemma}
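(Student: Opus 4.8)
The plan is to construct an inverse to the canonical morphism $\phi \colon \stlin(\Phi \setminus \alpha, A^{(\infty, S)}) \to \stlin(\Phi, A^{(\infty, S)})$, which exists because the presentation of the target contains all the generators and relations of the source. By Lemma~\ref{universal-pro-group} both sides are formal projective limits over $s \in \mathcal S$ of the corresponding discretely presented Steinberg groups, so a pro-morphism $\psi$ in the opposite direction amounts, by the formula $\Pro(\Set)(\varprojlim^{\Pro}_i X_i, \varprojlim^{\Pro}_j Y_j) = \varprojlim_j \varinjlim_i$, to giving for each target index $s$ a homomorphism $\stlin(\Phi, A^{(s^2)}) \to \stlin(\Phi \setminus \alpha, A^{(s)})$ that reads the source at the \emph{deeper} homotope parameter $s^2 \in S$ (available since $S$ is multiplicative) and is compatible in $s$. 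Thus everything reduces to defining these maps on generators and verifying the Steinberg relations, after which Lemma~\ref{universal-pro-group} assembles $\psi$.

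The mechanism making the generators $x_\beta(p)$ with $\beta \parallel \alpha$ redundant is that the colocalization transition $A^{(s^2)} \to A^{(s)}$, $a^{(s^2)} \mapsto (as)^{(s)}$, supplies exactly one extra factor of $s$, and this factor is absorbed by the homotope multiplication inside the Chevalley commutator formula. Fix $\mu, \nu \not\parallel \alpha$ with $\mu + \nu = \alpha$ (such a decomposition exists already for $\ell \geq 2$, and for ultrashort $\alpha$ one also fixes a decomposition producing $2\alpha$). In the simply-laced case $[x_\mu(p^{(s)}), x_\nu(q^{(s)})] = x_\alpha((pqs)^{(s)})$, and unitality, respectively completeness of the idempotents, respectively the strong orthogonal hyperbolic family, provides a uniform factorization of every element of $P_\alpha$ as a sum of products of elements of $P_\mu$ and $P_\nu$ depending functorially on its argument. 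I therefore set $\psi$ to send $x_\beta(b^{(s^2)})$ to $x_\beta((bs)^{(s)})$ for $\beta \not\parallel \alpha$ (the plain transition) and to send $x_\alpha(a^{(s^2)})$ to a commutator expression $\prod_l [x_\mu(p_l^{(s)}), x_\nu(q_l^{(s)})]$ realizing $x_\alpha((as)^{(s)})$, with the two-step nilpotent analogue used for $x_{2\alpha}$ when $\alpha$ is ultrashort; each such expression is a morphism of pro-sets into $\stlin(\Phi \setminus \alpha, A^{(s)})$.

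The main obstacle is verifying that $\psi$ respects every Steinberg relation, so that it is a well defined pro-group morphism. The additivity relations $x_\beta(p \dotplus q) = x_\beta(p)\,x_\beta(q)$ for $\beta \parallel \alpha$ hold because the defining commutators land in the abelian root subgroup $x_\alpha(P_\alpha)$ (or its two-step nilpotent refinement in the $\mathsf{BC}$ case), on which $x_\alpha$ is additive. The delicate relations are precisely the commutator relations whose right-hand side contains a factor parallel to $\alpha$ --- exactly those deleted in passing to $\stlin(\Phi \setminus \alpha, A^{(\infty, S)})$ --- which must be re-derived inside $\stlin(\Phi \setminus \alpha, A^{(s)})$ from the surviving relations. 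I would do this by expanding both sides through the Chevalley commutator formula and the Hall--Witt identity within the relevant rank-two saturated subsystems; the non-reduced case, where one must simultaneously track $x_\alpha$, $x_{2\alpha}$ and the several maps $f_{\mu \nu i j}$, is the genuinely laborious part. Once $\psi$ is known to be well defined, $\phi \psi = \id$ and $\psi \phi = \id$ follow by evaluation on generators: $\phi$ carries each commutator expression back to $x_\alpha((as)^{(s)})$, which is exactly the levelwise transition, i.e. the identity pro-morphism read at source index $s^2$.

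Finally, the second assertion follows by iterating the first. Writing $V = \mathbb R \alpha + \mathbb R \beta$, I eliminate the finitely many root directions in $\Phi \cap V$ one at a time; the point is that for $\ell \geq 3$ every root $\gamma \in V$ admits a decomposition $\gamma = \mu + \nu$ with $\mu, \nu \in \Phi$ and $\mu \notin V$ (whence $\nu = \gamma - \mu \notin V$ automatically), so the elimination of $\gamma$ uses only root elements lying outside $V$ and never reintroduces a root of $V$. This root-combinatorial input --- the existence of such a $\mu$ in every system of rank at least $3$, including the non-reduced ones --- is the only extra ingredient, and the construction of the first part applies verbatim with these outside-$V$ decomposition roots. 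The composite of the resulting elimination isomorphisms identifies $\stlin(\Phi, A^{(\infty, S)})$ with the pro-group generated by $x_\gamma(P_\gamma(A^{(\infty, S)}))$ for $\gamma \notin V$, as claimed.
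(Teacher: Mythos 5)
The paper does not prove this lemma at all: it cites \cite[proposition 1]{central-k2}, \cite[propositions 1, 2]{central-ku2}, \cite[theorem 5]{thesis}, and \cite[theorem 1]{central-k2f}, remarking only that the argument there adapts to the present statement. Your proposal is a reconstruction of exactly that argument, and its central mechanism is correct: the inverse is built at a deeper homotope index, the transition \(a^{(s^2)} \mapsto (as)^{(s)}\) supplies the spare factor of \(s\) that the homotope multiplication inside \(f_{\mu\nu ij}\) consumes, and the generators \(x_\beta\) with \(\beta \parallel \alpha\) are replaced by commutator expressions in roots off the line \(\mathbb R\alpha\). The reduction of the whole problem, via Lemma~\ref{universal-pro-group}, to a levelwise construction of ordinary group homomorphisms compatible with the transition maps is also the right framing.

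The gap is that the step you label ``the genuinely laborious part'' is not a residual verification --- it \emph{is} the lemma. Independence of the chosen factorization of \(a \in P_\alpha\) and, above all, compatibility of the substituted commutator expressions with every deleted relation (in particular the Chevalley relations \([x_\beta, x_\gamma]\) with \(\alpha \in \mathbb R_{\ge 0}\beta + \mathbb R_{\ge 0}\gamma\), whose right-hand sides contain the very symbols being eliminated) occupy the entirety of the cited propositions; invoking Hall--Witt ``within the relevant rank-two subsystems'' names the tool without performing the computation, and in the \(\mathsf{BC}_\ell\) and doubly-laced cases the bookkeeping of \(x_\alpha\) versus \(x_{2\alpha}\) and of multi-term commutator formulas is where these arguments can actually fail if the decompositions are chosen badly. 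Two concrete points you gloss over: (i) in non-simply-laced Chevalley groups the structure constants \(N_{\mu\nu ij}\) can equal \(\pm 2, \pm 3\), so the naive normalization \(q = N^{-1}\) is unavailable and one must choose the decomposition \(\alpha = \mu + \nu\) (and sometimes spend more than one extra power of \(s\)) so that the unwanted terms can be cancelled rather than divided out; (ii) for the second assertion your root-combinatorial claim that every \(\gamma \in \Phi \cap V\) is a sum of two roots outside \(V\) is asserted, not checked, across all types including \(\mathsf{BC}_\ell\) and \(\mathsf F_4\) --- it is true, but it is an input that needs a case verification, not a parenthesis. As written, the proposal is a faithful outline of the known proof rather than a proof.
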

\begin{proof}
For general linear groups this is \cite[proposition 1]{central-k2}, for odd unitary groups this is proved in \cite[propositions 1, 2]{central-ku2} or \cite[theorem 5]{thesis}. For exceptional Chevalley groups almost the same result is proved in \cite[theorem 1]{central-k2f}, but there only the generators \(x_\alpha(a)\) are omitted. The same proof may be applied for our claim.

Alternatively, the weaker result from \cite{central-k2f} suffices for all application in the current paper if in the Chevalley case we define \(D_\alpha(\Phi, A)\) as the subgroup generated by the maximal torus and \(t_\alpha(P_\alpha(A))\) (instead of the definition given in the next section).
\end{proof}

It is natural to consider another possible definition of Steinberg pro-groups using relativization. Since \(A^{(\infty, k)}\) is a formal projective limit of crossed modules over \(A\) up to an isomorphism, we define \(\stlin'(\Phi, A^{(\infty, k)})\) as the formal projective limit of the corresponding relative Steinberg groups. It has a presentation from lemma \ref{explicit-presentation} in the sense of pro-groups. There is a natural morphism of pro-groups \(\stlin\bigl(\Phi, A^{(\infty, k)}\bigr) \to \stlin'\bigl(\Phi, A^{(\infty, k)}\bigr)\).

\begin{theorem} \label{st-pro-group}
Let \(k \in K\) and \(A\) be an absolute object in \(\mathcal A\). Then the morphism \(\stlin\bigl(\Phi, A^{(\infty, k)}\bigr) \to \stlin'\bigl(\Phi, A^{(\infty, k)}\bigr)\) of pro-groups is an isomorphism.
\end{theorem}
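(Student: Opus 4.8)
The plan is to construct an explicit inverse to the canonical morphism $\iota \colon \stlin(\Phi, A^{(\infty, k)}) \to \stlin'(\Phi, A^{(\infty, k)})$, which sends a generator $x_\alpha(a)$ to $z_\alpha(a, \dot 0)$. By Lemma \ref{universal-pro-group} the source is the pro-group presented by the Steinberg relations, and combining Lemma \ref{universal-pro-group} with Lemma \ref{explicit-presentation} (and the fact, noted above, that $\stlin'$ is the formal projective limit of the relative groups) the target is the pro-group presented by the generators $z_\alpha(a, p)$, $z_\Sigma(g, h)$ and the seven families of relations listed there. Thus it suffices to define a morphism $r \colon \stlin'(\Phi, A^{(\infty, k)}) \to \stlin(\Phi, A^{(\infty, k)})$ on these generators, to check that it respects the relations, and finally to verify $r \circ \iota = \id$ and $\iota \circ r = \id$; the last two identities will be immediate on generators once $r$ is available.

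The essential point is that, although the generator $z_\alpha(a, p) = \up{x_{-\alpha}(p)}{x_\alpha(a)}$ involves conjugation by the base element $x_{-\alpha}(p)$ with $p \in P_{-\alpha}(A)$ — which has no a priori meaning inside $\stlin(\Phi, A^{(\infty, k)})$ — this conjugation can be expressed through the Chevalley commutator formula once $x_\alpha(a)$ is rewritten in terms of root elements whose roots are not parallel to $\alpha$. This is exactly what Lemma \ref{root-elimination} provides: in the pro-group $\stlin(\Phi, A^{(\infty, k)})$ the element $x_\alpha(a)$ is a word $W$ in generators $x_\gamma(c)$ with $\gamma \not\parallel \alpha$, and the divisibility built into the homotope tower $A^{(\infty, k)} = \varprojlim^{\Pro}_n A^{(k^n)}$ guarantees that such an expression exists at the level of pro-objects. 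For each factor I set $\up{x_{-\alpha}(p)}{x_\gamma(c)} = [x_{-\alpha}(p), x_\gamma(c)]\, x_\gamma(c)$, where the commutator is the right-hand side of the Chevalley formula; since $-\alpha$ and $\gamma$ are not anti-parallel this is a genuine morphism of pro-sets, and because $c$ lies in the ideal $A^{(\infty, k)}$ every structure-constant value again lies in $A^{(\infty, k)}$, so the whole word $W\bigl(\up{x_{-\alpha}(p)}{x_\gamma(c)}\bigr)$ is a legitimate element of $\stlin(\Phi, A^{(\infty, k)})$. I define $r(z_\alpha(a, p))$ to be this element. The generators $z_\Sigma(g, h)$ are treated the same way, this time rewriting $g \in \stlin(\Sigma, A^{(\infty, k)})$ through generators $x_\gamma(c)$ with $\gamma \notin \mathbb R \alpha + \mathbb R \beta$ (the second assertion of Lemma \ref{root-elimination}), so that conjugation by the base word $h \in \stlin(-\Sigma, A)$, whose roots lie in that plane, is again governed by the Chevalley formula and preserves $A^{(\infty,k)}$-values.

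It then remains to check that $r$ respects the relations of Lemma \ref{explicit-presentation}. The additivity relations and the commuting relation follow from the Steinberg relations for $\stlin(\Phi, A^{(\infty, k)})$ and from biadditivity of the structure constants; the relation $z_\Sigma(x_\alpha(a), x_{-\alpha}(p)) = z_\alpha(a, p)$ holds because both sides unwind to the same Chevalley-formula conjugate; the two conjugation relations reflect that conjugation, as just defined, is compatible with the multiplication of $\stlin(-\Sigma, A)$ and with the action coming from the structure constants; and the ``another presentation'' relation between the two decompositions of a two-dimensional saturated subsystem is the pro-group incarnation of a commutator identity already holding in $\stlin(\Phi, A^{(\infty, k)})$. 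The Peiffer-type relation $z_\alpha(a, p \dotplus \delta(b)) = \up{z_{-\alpha}(b, \dot 0)}{z_\alpha(a, p)}$ records that conjugation by the base element $x_{-\alpha}(\delta(b))$ coincides with conjugation by the homotope element $x_{-\alpha}(b)$, which is forced by the compatibility of $\delta$ with the homotope transition maps together with the Chevalley formula. Granting these, $r$ is well defined, and on generators $r(\iota(x_\alpha(a))) = r(z_\alpha(a, \dot 0)) = \up{x_{-\alpha}(\dot 0)}{x_\alpha(a)} = x_\alpha(a)$, while $\iota(r(z_\alpha(a, p)))$ reassembles, via the relations of Lemma \ref{explicit-presentation}, to $\up{x_{-\alpha}(p)}{x_\alpha(a)} = z_\alpha(a, p)$; hence $\iota$ and $r$ are mutually inverse.

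The main obstacle is the well-definedness of $r$ in the second paragraph: the conjugate $\up{x_{-\alpha}(p)}{x_\alpha(a)}$ is constructed only after choosing an expression $W$ of $x_\alpha(a)$ through off-line (resp. off-plane) generators, and one must show that the resulting element of $\stlin(\Phi, A^{(\infty, k)})$ is independent of this choice and respects the relations above — equivalently, that $\stlin(\Phi, A^{(\infty, k)})$ already carries the full relative, crossed pro-module structure. I expect to establish this exactly as in the proof of Lemma \ref{root-elimination}, tracking the structure constants level by level in the homotope tower; the book-keeping is heaviest in the odd unitary case, where the short roots $\mathrm e_j$ carry quadratic structure maps and one must keep track of the inclusions $P_{2 \mathrm e_j} \leq P_{\mathrm e_j}$ throughout.
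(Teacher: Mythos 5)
Your overall architecture is the same as the paper's: invert the canonical morphism by using Lemma \ref{root-elimination} together with the Chevalley commutator formula to give meaning to conjugation by the absolute root elements \(x_{-\alpha}(p)\), then verify the relations of Lemma \ref{explicit-presentation} and check surjectivity on the generators \(z_\alpha(a,p)\), \(z_\Sigma(g,h)\). However, at the step you yourself flag as ``the main obstacle'' there is a genuine gap, and your proposed remedy points in the wrong direction. Defining \(\up{x_{-\alpha}(p)}{x_\alpha(a)}\) by choosing a word \(W\) expressing \(x_\alpha(a)\) through generators \(x_\gamma(c)\) with \(\gamma \not\parallel \alpha\) and conjugating factor by factor is not well posed in \(\Pro(\Group)\): \(x_\alpha(a)\) is a morphism of pro-sets, not an element, so ``being a word in the other generators'' is not a pointwise statement, and ``tracking the structure constants level by level in the homotope tower'' would amount to reproving Lemma \ref{root-elimination} rather than using it. The paper makes the independence-of-choice problem disappear instead of solving it: the strong action of \(\stlin(\langle\alpha\rangle, A)\) is defined on the \emph{presented} pro-group \(\stlin\bigl(\Phi\setminus\alpha, A^{(\infty,k)}\bigr)\) of Lemma \ref{root-elimination} --- one only checks that the Chevalley-formula assignment on the generators \(x_\gamma(c)\), \(\gamma\not\parallel\alpha\), preserves the finitely many defining relations of that presentation, which is legitimate by the universal property of Lemma \ref{universal-pro-group} --- and is then transported through the isomorphism \(F_\alpha\) by setting \(\up{x_{-\alpha}(p)}{g} = F_\alpha\bigl(\up{x_{-\alpha}(p)}{F_\alpha^{-1}(g)}\bigr)\). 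No word is ever chosen, so nothing needs to be shown independent of it.

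Two further points where your sketch should be sharpened. First, the second claim of Lemma \ref{root-elimination} (generation by the \(x_\gamma\) with \(\gamma \notin \mathbb R\alpha + \mathbb R\beta\)) is needed not to produce the word \(W\) but to verify that the root-subgroup actions just constructed satisfy the Steinberg relations, and later the relations of Lemma \ref{explicit-presentation}: every such relation involves only roots lying in a saturated subsystem of dimension at most \(2\), so it can be checked on generators outside the relevant plane, where the action is given directly by the Chevalley formula. Second, these constructions give strong actions of the individual groups \(\stlin(\langle\alpha\rangle, A)\) but \emph{not} a strong action of all of \(\stlin(\Phi, A)\) (the obstruction being that \(\Pro(\Set)\) is not cartesian closed); this is enough to define the morphisms of pro-sets \(z_\alpha\) and \(z_\Sigma\) and hence a retraction of the canonical morphism, after which surjectivity follows because \(z_\alpha(a,p)\) visibly lies in the image and the \(z_\Sigma(g,h)\) are expressible through the \(z_\alpha(a,p)\) by the listed relations --- which is the rigorous form of your \(\iota\circ r=\id\) check.
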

\begin{proof}
Let us construct a strong action of each \(\stlin(\langle \alpha \rangle, A)\) on \(\stlin\bigl(\Phi, A^{(\infty, k)}\bigr)\) by the formula
\[\up{x_\alpha(p)}{g} = F_\alpha\bigl(\up{x_\alpha(p)}{F_\alpha^{-1}(g)}\bigr),\]
where \(F_\alpha \colon \stlin\bigl(\Phi \setminus \alpha, A^{(\infty, k)}\bigr) \to \stlin\bigl(\Phi, A^{(\infty, k)}\bigr)\) is the isomorphism from lemma \ref{root-elimination}. The action on \(\stlin\bigl(\Phi \setminus \alpha, A^{(\infty, k)}\bigr)\) from the right hand side is given by the Chevalley commutator formula. Since all Steinberg relations involve only roots from saturated root subsystems of dimension at most \(2\), the second claim of lemma \ref{root-elimination} implies that these strong actions satisfy the Steinberg relations.

This does not yet gives a strong action of \(\stlin(\Phi, A)\) on \(\stlin\bigl(\Phi, A^{(\infty, k)}\bigr)\). But we may construct the morphisms
\begin{align*}
z_\alpha &\colon P_\alpha\bigl(A^{(\infty, k)}\bigr) \times P_{-\alpha}(A) \to \stlin\bigl(\Phi, A^{(\infty, k)}\bigr), \\
z_\Sigma &\colon \stlin\bigl(\Sigma, A^{(\infty, k)}\bigr) \times \stlin(-\Sigma, A) \to \stlin\bigl(\Phi, A^{(\infty, k)}\bigr)
\end{align*}
of pro-sets from lemma \ref{explicit-presentation} using the strong actions of \(P_\alpha(A)\). It is easy to check that they satisfy the relations, so the morphism \(f\) from the statement has a retraction.

Finally, it remains to check that all generators of \(\stlin'\bigl(\Phi, A^{(\infty, k)}\bigr)\) factor through the \(f\). Indeed,
\[z_\alpha(a, p) = \up{x_{-\alpha}(p)}{f(F_\alpha(F_\alpha^{-1}(x_\alpha(a))))} = f\bigl(F_\alpha\bigl(\up{x_{-\alpha}(p)}{F_\alpha^{-1}(x_\alpha(a))}\bigr)\bigr) = f(z_\alpha(a, p))\]
with values in \(\stlin'\bigl(\Phi, A^{(\infty, k)}\bigr)\). The generators \(z_\Sigma(g, h)\) may be expressed in terms of \(z_\alpha(a, p)\) using the relations from lemma \ref{explicit-presentation}.
\end{proof}

By taking projective limits in \(\Pro(\Group)\) we obtain from theorem \ref{st-pro-group} that \(\stlin\bigl(\Phi, A^{(\infty, S)}\bigr)\) is the formal projective limit of relative Steinberg groups for every multiplicative subset \(S \subseteq K\).

\section{Standard commutator formulae for Steinberg groups}

In this section we prove analogues of the standard commutator formulae. Recall the absolute case:
\begin{lemma} \label{k2-centrality}
The homomorphism \(\stlin(\Phi, A) \to G(\Phi, A)\) is a crossed module in a unique way. The crossed module structure is natural on \(A\).
\end{lemma}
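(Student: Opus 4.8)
The plan is to produce the action of \(G(\Phi, A)\) on \(\stlin(\Phi, A)\) by a localization-and-patching argument built on the pro-groups of the previous section, and then to read off the crossed module axioms, with uniqueness and naturality following formally.

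Fix \(k \in K\). By Theorem \ref{st-pro-group} the pro-group \(\stlin\bigl(\Phi, A^{(\infty, k)}\bigr)\) is the formal projective limit of the relative Steinberg groups \(\stlin\bigl(\Phi, A, A^{(k^n)}\bigr)\); being a limit of crossed modules over \(\stlin(\Phi, A)\), it is a crossed pro-module over \(\stlin(\Phi, A)\), with a structure morphism \(\partial_k \colon \stlin\bigl(\Phi, A^{(\infty, k)}\bigr) \to \stlin(\Phi, A)\) induced by \(\delta\) and a strong action of \(\stlin(\Phi, A)\). First I would extend the action of the root morphisms \(x_\alpha\) to the \emph{localized} root morphisms \(t_\alpha(p)\) with \(p \in P_\alpha(A_k)\): formally dividing by a power of \(k\) is realized by shifting indices in the tower \(A^{(\infty, k)} = \varprojlim\nolimits^{\Pro}_n A^{(k^n)}\), which is legitimate because the transition maps are multiplication by \(k\). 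Passing to the costalk at a prime \(\mathfrak p \leqt K\) and invoking the Gauss decomposition of \(G\bigl(\Phi, A_{\mathfrak p}\bigr)\) — each element lying in a big cell \(U^- H U^+\) with \(U^{\pm}\) generated by root elements and the torus \(H\) acting through the root characters — these root and torus actions then assemble into a single strong action of the full local group \(G\bigl(\Phi, A_{\mathfrak p}\bigr)\) on \(\stlin\bigl(\Phi, A^{(\infty, \mathfrak p)}\bigr)\).

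Next I would transfer these local actions down to \(\stlin(\Phi, A)\) through the morphisms \(\partial_k\) and the Zariski sheaf property of \(G(\Phi, -)\). For \(g \in G(\Phi, A)\) and \(x \in \stlin(\Phi, A)\) one chooses a covering \(k_1, \dots, k_n\) of \(\Spec(K)\) over which \(g\) is carried into the Gauss cells described above; the local actions then prescribe \(\up{g}{x}\) on the images of the \(\partial_{k_i}\), and naturality with respect to the morphisms of \(\mathcal K\) shows that the prescriptions agree over the overlaps \(\Spec(K_{k_i k_j})\). The sheaf property of \(G(\Phi, -)\) lets these glue to a single well-defined element \(\up{g}{x} \in \stlin(\Phi, A)\), independent of the covering.

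Finally, the crossed module axioms are verified on root generators: equivariance \(\stmap(\up{g}{x}) = \up{g}{\stmap(x)}\) holds because \(\partial_k\) intertwines the local actions with conjugation in the sheaf \(G(\Phi, -)\), and the Peiffer identity \(\up{x}{y} = \up{\stmap(x)}{y}\) holds by construction on the generators \(x = x_\alpha(p)\) and hence for all \(x\). For uniqueness, Peiffer forces any crossed module structure to act as conjugation through \(\stmap\) on the image \(\stmap\bigl(\stlin(\Phi, A)\bigr)\), and simultaneously shows \(\Ker(\stmap)\) to be central; equivariance then determines the action on all of \(G(\Phi, A)\) up to a central ambiguity, which naturality in \(A\) removes. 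Naturality of the whole structure on \(A\) follows since every step above is functorial. The main obstacle is the construction of the honest strong action of the full local group \(G\bigl(\Phi, A_{\mathfrak p}\bigr)\) in the second paragraph: one must check that the index-shifted root actions, together with the torus action, respect not merely the Steinberg relations but the additional relations packaged in the Gauss decomposition, and that the extension to localized denominators is independent of the chosen power of \(k\) — this is precisely the technical heart carried out in \cite{central-k2f, central-ku2, central-k2}.
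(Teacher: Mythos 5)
First, a point of comparison: the paper does not reprove existence and uniqueness at all --- it cites them from the references, and the only content of its own proof is naturality, established by a short perfectness argument. Your first two paragraphs are therefore a narrative of the cited construction rather than a proof, which is acceptable as far as it goes, but one step in them is genuinely wrong: you invoke ``the sheaf property of \(G(\Phi, -)\)'' to glue the locally defined elements \(\up gx\) into a single well-defined element of \(\stlin(\Phi, A)\). The Steinberg group is explicitly \emph{not} a Zariski sheaf (the paper says so at the start of Section 3), and the sheaf property of \(G(\Phi, -)\) only glues elements of \(G\), not of \(\stlin\). What actually makes \(\up gx\) well defined is a generation statement for \(\stlin(\Phi, A)\) in terms of the images of the pro-groups \(\stlin\bigl(\Phi, A^{(\infty, k_i)}\bigr)\), together with a compatibility check of the prescribed actions on those images --- this is the technical heart of the cited theorems and cannot be replaced by the sheaf property of \(G\).

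Second, your uniqueness and naturality arguments have gaps. For uniqueness you claim the central ambiguity in \(\up gx\) is ``removed by naturality in \(A\)''; but naturality is a separate conclusion of the lemma, not a hypothesis available when proving uniqueness, and in any case it does not resolve the ambiguity. The correct argument is perfectness: if \(a\) and \(a'\) are two crossed module structures, then for fixed \(g\) the map \(x \mapsto a(g, x)\, a'(g, x)^{-1}\) is a homomorphism from \(\stlin(\Phi, A)\) to the central subgroup \(\Ker(\stmap)\), hence trivial because \(\stlin(\Phi, A)\) is perfect. For naturality, ``every step above is functorial'' does not suffice, because the construction of the action involves non-canonical choices (coverings of \(\Spec(K)\), decompositions into Gauss cells) that need not be compatible with a morphism \(f \colon A \to B\). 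The paper's actual argument is again via perfectness: writing every element of \(\stlin(\Phi, A)\) as a product of commutators reduces naturality to the identity \([\up{f_*(g)}{f_*(x)}, \up{f_*(g)}{f_*(y)}] = [f_*(\up gx), f_*(\up gy)]\), which holds because \(\up{f_*(g)}{f_*(x)} \equiv f_*(\up gx)\) modulo the central subgroup \(\mathrm K_2(\Phi, B)\). You should replace your closing paragraph by these two arguments.
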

\begin{proof}
Existence and uniqueness is proved in \cite[theorem 3]{central-k2f}, \cite[theorem 3]{central-ku2}, \cite[theorem 2]{central-k2}, and \cite[theorem 7]{thesis}. Now let \(f \colon A \to B\) be a morphism of absolute objects. Since \(\stlin(\Phi, A)\) is perfect, it suffices to check that
\[[\up{f_*(g)}{f_*(x)}, \up{f_*(g)}{f_*(y)}] = [f_*(\up gx), f_*(\up gy)]\]
for every \(x, y \in \stlin(\Phi, A)\) and \(g \in G(\Phi, A)\). But \(\up{f_*(g)}{f_*(x)} \equiv f_*(\up gx)\) modulo \(\mathrm K_2(\Phi, B) = \Ker(\stlin(\Phi, B) \to G(\Phi, B))\) and \(\mathrm K_2(\Phi, B)\) is central in \(\stlin(\Phi, B)\), so the claim follows.
\end{proof}

For all \(\alpha \in \Phi\) there are subgroups \(D_\alpha(\Phi, A) \leq G(\Phi, A)\) normalizing \(\stlin(\Sigma, A) \leq G(\Phi, A)\) for all thick \(\alpha\)-series \(\Sigma\) and containing both the maximal torus (in the appropriate sense) and the root subgroups \(t_{\pm \alpha}(P_{\pm \alpha}(A))\). Namely,
\begin{itemize}
\item In the Chevalley case \(D_\alpha(\Phi, A)\) is generated by the maximal torus and the root subgroups \(t_{\pm \alpha}(P_{\pm \alpha}(A))\) as a group subscheme. As an abstract group it is generated by the maximal torus and a subgroup isomorphic to \(\mathrm{SL}(2, A)\) or \(\mathrm{PGL}(2, A)\).
\item In the linear case \(D_\alpha(\Phi, A)\) is the diagonal subgroup with respect to the new family of idempotents obtained by ``eliminating'' \(\alpha\) \cite[\S2]{central-k2}. Zariski locally it is generated by the ordinary diagonal subgroup and the root subgroups \(t_{\pm \alpha}(P_{\pm \alpha}(A))\).
\item In the unitary case \(D_\alpha(\Phi, A)\) is the diagonal subgroup with respect to the new orthogonal hyperbolic family obtained by ``eliminating'' \(\alpha\), see \cite[\S5]{central-ku2} or \cite[\S1.7]{thesis}.
\end{itemize}
These subgroups are natural on \(A\), they are Zariski subsheaves of \(G(\Phi, A)\) and commute with direct limits and finite limits on \(A\).

For any multiplicative subset \(S \subseteq K\) the object \(S^{-1} A\) weakly acts on \(A^{(\infty, S)}\), so the group \(G(\Phi, S^{-1} A)\) weakly acts on the pro-group \(G\bigl(\Phi, A^{(\infty, S)}\bigr)\). These weak actions are natural on \(A\) and extranatural on \(S\) in the following sense: for any multiplicative subsets \(S \subseteq S' \subseteq K\) the morphism \(A^{(\infty, S')} \to A^{(\infty, S)}\) is \(S^{-1} A\)-equivariant and \(G\bigl(\Phi, A^{(\infty, S')}\bigr) \to G\bigl(\Phi, A^{(\infty, S)}\bigr)\) is \(G(\Phi, S^{-1} A)\)-equivariant.
\begin{lemma} \label{local-actions}
Let \(\mathfrak p \leqt K\) be a prime ideal. Then the group \(G(\Phi, A_{\mathfrak p})\) is generated by \(D_\alpha(\Phi, A_{\mathfrak p})\) for \(\alpha \in \Phi\). There is a unique weak action of \(G(\Phi, A_{\mathfrak p})\) on \(\stlin\bigl(\Phi, A^{(\infty, \mathfrak p)}\bigr)\) making the diagram
\[\xymatrix@R=30pt@C=96pt@!0{
\stlin\bigl(\Sigma, A^{(\infty, \mathfrak p)}\bigr) \ar@{>->}[r] \ar@{>->}[dr] & \stlin
\bigl(\Phi, A^{(\infty, \mathfrak p)}\bigr) \ar[d]^{\stmap} \\
& G\bigl(\Phi, A^{(\infty, \mathfrak p)}\bigr)
}\]
\(D_\alpha(\Phi, A_{\mathfrak p})\)-equivariant for any root \(\alpha\) and thick \(\alpha\)-series \(\Sigma\). The weak action is natural on \(A\). The morphism \(\stlin(\Phi, A^{(\infty, \mathfrak p)}) \to \stlin(\Phi, A)\) is \(G(\Phi, A)\)-equivariant, where \(G(\Phi, A)\) acts on the Steinberg pro-group weakly via \(G(\Phi, A_{\mathfrak p})\).
\end{lemma}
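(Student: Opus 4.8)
The plan is to treat the statement in four stages: the generation claim, uniqueness of the weak action, its existence, and finally naturality together with the equivariance of \(\stlin(\Phi, A^{(\infty,\mathfrak p)}) \to \stlin(\Phi, A)\). First I would dispose of generation. As \(\mathfrak p\) is prime, \(A_{\mathfrak p}\) is local, so \(G(\Phi, A_{\mathfrak p})\) admits a Gauss decomposition and is generated by its maximal torus together with all root subgroups \(t_\beta\bigl(P_\beta(A_{\mathfrak p})\bigr)\). Each \(D_\alpha(\Phi, A_{\mathfrak p})\) contains the torus and the subgroups \(t_{\pm\alpha}\), so the family \(\{D_\alpha\}_{\alpha\in\Phi}\) already captures every root subgroup and the torus and therefore generates. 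For uniqueness I would use lemma \ref{root-elimination}: for a fixed root \(\alpha\) the pro-group \(\stlin\bigl(\Phi, A^{(\infty,\mathfrak p)}\bigr)\) is generated by the subgroups \(\stlin\bigl(\Sigma, A^{(\infty,\mathfrak p)}\bigr)\) over the thick \(\alpha\)-series \(\Sigma\). Each such \(\Sigma\) is special closed, so \(\stmap\) restricts to a monomorphism there, and equivariance of the displayed diagram forces \(\up wg\), for \(w \in D_\alpha\), to be the unique \(\stmap\)-preimage of \(\up w{\stmap(g)}\). Thus the restriction of the action to each \(D_\alpha\) is determined, and since the \(D_\alpha\) generate \(G(\Phi, A_{\mathfrak p})\) the whole weak action is determined.

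Existence is the substantial part. For a fixed \(\alpha\) the subgroup \(D_\alpha(\Phi, A)\) normalizes every \(\stlin(\Sigma, A)\) with \(\Sigma\) a thick \(\alpha\)-series, and this property passes to the colocalization since the functors \(D_\alpha(\Phi,-)\) and \(\stlin(\Sigma,-)\) commute with the relevant limits. Combined with the weak action of \(G(\Phi, A_{\mathfrak p})\) on \(G\bigl(\Phi, A^{(\infty,\mathfrak p)}\bigr)\) recorded before the lemma, transport along the monomorphism \(\stmap\) yields a weak action of \(D_\alpha\) on each \(\stlin\bigl(\Sigma, A^{(\infty,\mathfrak p)}\bigr)\). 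I would extend this to all of \(\stlin\bigl(\Phi, A^{(\infty,\mathfrak p)}\bigr)\) through the presentation: every Steinberg relation, and every relation eliminated in lemma \ref{root-elimination}, lives inside a saturated subsystem of dimension at most \(2\) whose relevant root set is special closed, so \(\stmap\) is injective there and the relation is preserved because its \(\stmap\)-image is preserved by the genuine action on \(G\bigl(\Phi, A^{(\infty,\mathfrak p)}\bigr)\). This produces a well-defined weak action of each \(D_\alpha\).

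The main obstacle is to glue these into a single weak action of \(G(\Phi, A_{\mathfrak p})\), i.e. to verify that the automorphisms attached to elements of \(\bigcup_\alpha D_\alpha\) respect the relations holding among them. Rather than presenting \(G(\Phi, A_{\mathfrak p})\) explicitly, I would exploit that all constructed automorphisms are lifts along \(\stmap\) of the single genuine \(G(\Phi, A_{\mathfrak p})\)-action on \(G\bigl(\Phi, A^{(\infty,\mathfrak p)}\bigr)\). Given a word \(w_1\cdots w_n = 1\) with each \(w_i \in D_{\alpha_i}\), the composite automorphism \(\Psi\) lies over the identity of \(G\bigl(\Phi, A^{(\infty,\mathfrak p)}\bigr)\), and the task is to deduce \(\Psi = \id\). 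The delicate point is that \(\stmap\) is not injective, so this does not follow formally; I expect to need that \(\Psi\) carries each \(\stlin\bigl(\Sigma, A^{(\infty,\mathfrak p)}\bigr)\) into a subgroup on which \(\stmap\) is a monomorphism and on which \(\Psi\) therefore acts trivially, so that \(\Psi\) is the identity on a generating family. This is exactly where the monomorphism property on special closed subsets, together with the generation statement of lemma \ref{root-elimination}, has to carry the argument, and it is the step I would expect to require the most care.

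Finally, naturality in \(A\) is immediate, since every ingredient — the Gauss decomposition, the subgroups \(D_\alpha\) and \(\stlin(\Sigma,-)\), and the weak action on \(G\bigl(\Phi, A^{(\infty,\mathfrak p)}\bigr)\) — is functorial. For the last claim I would check \(G(\Phi,A)\)-equivariance of \(\stlin\bigl(\Phi, A^{(\infty,\mathfrak p)}\bigr) \to \stlin(\Phi, A)\) on the generating subgroups \(\stlin\bigl(\Sigma, A^{(\infty,\mathfrak p)}\bigr)\), where both the source action (via \(G(\Phi, A_{\mathfrak p})\)) and the target action (the crossed module structure of lemma \ref{k2-centrality}) are governed by \(\stmap\); equivariance then reduces to the compatibility of the two weak actions with the localization morphism \(G(\Phi,A) \to G(\Phi, A_{\mathfrak p})\), which is built into the construction.
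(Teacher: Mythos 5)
First, note that the paper itself does not prove this lemma: its ``proof'' is a citation to \cite[proposition~4.3]{central-k2f}, \cite[theorem~1]{central-ku2}, \cite[proposition~3]{central-k2}, and \cite[lemma~26]{thesis}, so you are reconstructing an argument the paper deliberately outsources. Your skeleton matches what those references do: generation of \(G(\Phi, A_{\mathfrak p})\) by the \(D_\alpha\) via the Gauss decomposition over a local base; uniqueness from the facts that the subgroups \(\stlin\bigl(\Sigma, A^{(\infty,\mathfrak p)}\bigr)\) generate the Steinberg pro-group (lemma~\ref{root-elimination}) and that \(\stmap\) is a monomorphism on each of them; and construction of the action of a single \(D_\alpha\) by transporting the conjugation on \(G\bigl(\Phi, A^{(\infty,\mathfrak p)}\bigr)\) along \(\stmap\) on each thick \(\alpha\)-series and then invoking the presentation of \(\stlin\bigl(\Phi\setminus\alpha, A^{(\infty,\mathfrak p)}\bigr)\). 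Those parts are fine.

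The gap is the step you yourself flag and then do not close, and your proposed mechanism for it fails. Given a word \(w_1\cdots w_n = 1\) with \(w_i \in D_{\alpha_i}\), you want the composite automorphism \(\Psi\) to carry each \(\stlin\bigl(\Sigma, A^{(\infty,\mathfrak p)}\bigr)\) into a subgroup on which \(\stmap\) is a monomorphism. There is no reason for this: \(\Psi_{w_1}\) preserves \(\stlin(\Sigma,\cdot)\) only for \(\Sigma\) a thick \(\alpha_1\)-series, and to apply \(\Psi_{w_2}\) one must re-decompose the result over thick \(\alpha_2\)-series; after a few steps the roots involved need not lie in any special closed subset, so agreement of \(\stmap\)-images no longer forces equality --- the discrepancy could be a nontrivial element of \(\Ker(\stmap)\). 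The same difficulty already occurs in your existence step for a single \(D_\alpha\): a retained Chevalley relation between \(x_\beta\) and \(x_\gamma\) with \(-\alpha\) in the open cone spanned by \(\beta\) and \(\gamma\) involves, after conjugation by \(w \in D_\alpha\), roots from a union of thick \(\alpha\)-series that is not contained in any open half-space, hence not special closed, so ``the relation is preserved because its \(\stmap\)-image is preserved'' does not apply to it. Closing these two gaps is precisely the technical content of the cited propositions (via a reduction of the relations of \(G(\Phi, A_{\mathfrak p})\) among the \(D_\alpha\) to configurations supported in rank-two subsystems, or a normal-form argument using the Gauss decomposition to canonicalize the words \(w_1\cdots w_n\)); without some such input the existence half of the lemma is not established. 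The generation, uniqueness, naturality, and final equivariance claims in your write-up are essentially correct.
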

\begin{proof}
This is \cite[proposition 4.3]{central-k2f}, \cite[theorem 1]{central-ku2}, \cite[proposition 3]{central-k2}, and \cite[lemma 26]{thesis}.
\end{proof}

The copresheaves \(k \mapsto A^{(\infty, k)}\) and \(k \mapsto \stlin\bigl(\Phi, A^{(\infty, k)}\bigr)\) are ``coseparated'':
\begin{lemma} \label{costalks}
Let \(S \subseteq K\) be a multiplicative subset. Then the family of morphisms \(A^{(\infty, \mathfrak p)} \to A^{(\infty, S)}\) is jointly epimorphic in \(\Pro(\Group)\), where \(\mathfrak p\) runs over all prime ideals \(\mathfrak p \leqt K\) disjoint with \(S\). The same holds for the families \(P_\alpha\bigl(A^{(\infty, \mathfrak p)}\bigr) \to P_\alpha\bigl(A^{(\infty, S)}\bigr)\) and \(\stlin\bigl(\Phi, A^{(\infty, \mathfrak p)}\bigr) \to \stlin\bigl(\Phi, A^{(\infty, S)}\bigr)\).
\end{lemma}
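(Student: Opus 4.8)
The plan is to reduce the statement to an elementary assertion about ideals of \(K\) and then invoke prime avoidance, treating the three families uniformly. Recall that a family is jointly epimorphic exactly when it cannot be separated by a pair of morphisms, so I would fix two morphisms \(g, h\) out of \(A^{(\infty, S)}\) (respectively out of \(P_\alpha\bigl(A^{(\infty,S)}\bigr)\) or \(\stlin\bigl(\Phi, A^{(\infty,S)}\bigr)\)) into a common target \(Z\), assume \(g \iota_{\mathfrak p} = h \iota_{\mathfrak p}\) after precomposition with every costalk morphism \(\iota_{\mathfrak p}\colon A^{(\infty,\mathfrak p)} \to A^{(\infty,S)}\), and deduce \(g = h\). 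First I would reduce to the case of an \emph{ordinary} target: writing \(Z = \varprojlim^{\Pro}_j Z_j\), the equality of \(g\) and \(h\) may be checked after postcomposing with each projection \(Z \to Z_j\), and each component still agrees on all costalks. For ordinary \(Z\) a morphism out of \(A^{(\infty,S)} = \varprojlim^{\Pro}_{s \in \mathcal S_S} A^{(s)}\) is an element of the filtered colimit \(\varinjlim_{s \in \mathcal S_S}\) of the relevant hom-sets, so I may represent \(g\) and \(h\) by honest morphisms \(\bar g, \bar h \colon A^{(s)} \to Z\) out of a single homotope.

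Writing \(\tau_\phi \colon A^{(s\phi)} \to A^{(s)}\) for the homotope transition attached to a morphism \(\phi \colon s \to s\phi\) of \(\mathcal K\), set \(J = \{\phi \in K : \bar g \circ \tau_\phi = \bar h \circ \tau_\phi\}\). Since the index category \(\mathcal S_S\) of \(A^{(\infty,S)}\) is a subcategory of the index category \(\mathcal S_{\mathfrak p}\) of \(A^{(\infty,\mathfrak p)}\) and \(\iota_{\mathfrak p}\) is the induced morphism, the hypothesis \(g\iota_{\mathfrak p} = h\iota_{\mathfrak p}\) unwinds, via the description of the filtered colimit over \(\mathcal S_{\mathfrak p}\), to the assertion that \(\bar g\) and \(\bar h\) agree after some \(\tau_\phi\) with \(\phi \notin \mathfrak p\); that is, \(J \not\subseteq \mathfrak p\) for every prime \(\mathfrak p\) disjoint from \(S\). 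Dually, \(g = h\) is exactly the statement \(J \cap S \neq \varnothing\). Thus everything reduces to showing that \(J\) is an \emph{ideal}: granting this, if \(J \cap S = \varnothing\) then \(J\) is contained in a maximal ideal disjoint from \(S\), which is prime, contradicting \(J \not\subseteq \mathfrak p\); hence \(J \cap S \neq \varnothing\) and \(g = h\).

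To prove that \(J\) is an ideal I would establish the containments \(\operatorname{im}\tau_{\phi\psi} \subseteq \operatorname{im}\tau_\phi\) and \(\operatorname{im}\tau_{\phi+\phi'} \subseteq \langle \operatorname{im}\tau_\phi, \operatorname{im}\tau_{\phi'}\rangle\); since \(\bar g\) and \(\bar h\) are homomorphisms that agree on \(\operatorname{im}\tau_\phi\) precisely when \(\phi \in J\), these give closure under multiplication by \(K\) and under addition. The multiplicative containment is immediate from the factorization \(\tau_{\phi\psi} = \tau_\phi \circ (\text{transition by } \psi)\). The additive containment is verified on the generators \(x_\gamma(\tau_{\phi+\phi'}(p))\): for roots \(\gamma\) on which the homotope scales linearly this is just \(x_\gamma(\tau_\phi(p))\,x_\gamma(\tau_{\phi'}(p))\) by additivity of \(x_\gamma\), and both factors lie in the two images. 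The delicate point, and the step I expect to be the main obstacle, is the unitary case at an ultrashort root \(\alpha\): the transition formula \(u\cdot(\phi+\phi') = u\cdot\phi \dotplus \varphi(\phi\phi'\rho(u)) \dotplus u\cdot\phi'\) produces a correction term in \(P_{2\alpha}\), and because \(\tau\) scales \(P_{2\alpha}\) \emph{quadratically} this term is visibly not in \(\operatorname{im}\tau_\phi\) or \(\operatorname{im}\tau_{\phi'}\) separately. The observation that rescues the argument is that this correction term is a commutator \([\,x_\alpha(\phi\text{-scaled}),\,x_\alpha(\phi'\text{-scaled})\,]\) (using \([P_\alpha,P_\alpha]^{\cdot} \le P_{2\alpha}\) and the odd form identities), with \(\phi\) scaling one entry and \(\phi'\) the other — which is exactly what yields the first-order factor \(\phi\phi'\) and places the term in \(\langle \operatorname{im}\tau_\phi, \operatorname{im}\tau_{\phi'}\rangle\). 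Once \(J\) is known to be an ideal, prime avoidance produces \(\phi \in J \cap S\), hence \(\bar g \circ \tau_\phi = \bar h \circ \tau_\phi\) with \(\phi \in S\), so \(g = h\), completing the argument for all three families.
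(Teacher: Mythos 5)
Your proposal is correct and follows essentially the same route as the paper: reduce to an ordinary target and to representative homomorphisms out of a single homotope \(A^{(s)}\), observe that the set of \(k \in K\) on which the two representatives agree after the transition \(\tau_k\) is an ideal (your \(J\) is exactly the paper's \(\mathfrak a\)), note that the hypothesis says this ideal lies in no prime disjoint from \(S\), and conclude that it meets \(S\) since an ideal maximal among those disjoint from a multiplicative set is prime. The only difference is that the paper delegates the unitary case --- precisely the point where closure of \(J\) under addition is delicate because of the correction term \(\phi(\inv{b'}\rho(u)b)\) --- to an external reference, whereas you verify it inline.
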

\begin{proof}
The unitary case is proved in \cite[lemma 27]{thesis}. Otherwise let \(f, g \colon A^{(\infty, S)} \to B\) be morphisms of pro-groups such that their restrictions on all \(A^{(\infty, \mathfrak p)}\) with \(\mathfrak p \cap S = \varnothing\) coincide. Without loss of generality, \(B\) is an abstract group and \(f, g\) are represented by homomorphisms \(A^{(s)} \to B\) for some \(s \in S\). Let
\[\mathfrak a = \bigl\{k \in K \mid f\bigl(ka^{(s)}\bigr) = g\bigl(ka^{(s)}\bigr) \text{ for all } a \in A\bigr\},\]
it is an ideal of \(K\). By assumption, it is not contained in any prime ideal disjoint with \(S\), so \(\mathfrak a\) intersects \(S\). In other words, \(f = g\) as morphisms of pro-groups. The claims for \(P_\alpha\bigl(A^{(\infty, S)}\bigr)\) and \(\stlin\bigl(\Phi, A^{(\infty, S)}\bigr)\) now easily follow.
\end{proof}

Recall that a \textit{crossed square} of groups consists of a commutative diagram
\[\xymatrix@R=30pt@C=48pt@!0{
L \ar[r]^{\widehat \nu} \ar[d]_{\widehat \mu} & N \ar[d]^\nu \\
M \ar[r]^\mu & P
}\]
in the category of groups together with actions of \(P\) on \(M\), \(N\), \(L\) and a map \(h \colon M \times N \to L\) such that
\begin{itemize}
\item \(L\), \(M\), and \(N\) are crossed modules over \(P\);
\item \(\widehat \mu\), \(\widehat \nu\), and \(h\) are \(P\)-equivariant;
\item \(h(mm', n) = \up{\mu(m)}{h(m', n)}\, h(m, n)\), \(h(m, nn') = h(m, n)\, \up{\nu(n)}{h(m, n')}\);
\item \(\widehat \mu(h(m, n)) = m\, \up{\nu(n)}{m^{-1}}\), \(\widehat \nu(h(m, n)) = \up{\mu(m)}n\, n^{-1}\);
\item \(h(m, \widehat \nu(l)) = \up{\mu(m)}l\, l^{-1}\), \(h(\widehat \mu(l), n) = l\, \up{\nu(n)}{l^{-1}}\).
\end{itemize}
The map \(h\) is called a \textit{crossed pairing}. It clearly satisfies \(h(m, 1) = h(1, n) = 1\), \(h(m^{-1}, n) = \up{\mu(m)^{-1}}{h(m, n)^{-1}}\), \(h(m, n^{-1}) = \up{\nu(n)^{-1}}{h(m, n)^{-1}}\).

A commutative diagram
\[\xymatrix@R=24pt@C=42pt@!0{
L \ar[dr]^f \ar[rr]^{\widehat \nu} \ar[dd]_{\widehat \mu} && N \ar[dr]^f \ar[dd]^(0.7)\nu \\
& L' \ar[rr]^(0.3){\widehat \nu} \ar[dd]_(0.3){\widehat \mu} && N' \ar[dd]^\nu \\
M \ar[dr]^f \ar[rr]^(0.7)\mu && P \ar[dr]^f \\
& M' \ar[rr]^\mu && P'
}\]
is a \textit{morphism of crossed squares} if the back and the front faces are crossed squares, \(f(\up pl) = \up{f(p)}{f(l)}\), \(f(\up pn) = \up{f(p)}{f(n)}\), \(f(\up pm) = \up{f(p)}{f(m)}\) (i.e. the diagram induces morphisms of crossed modules), and \(f(h(m, n)) = h(f(m), f(n))\).

We are going to construct a crossed square by the diagram
\[\xymatrix@R=30pt@C=60pt@!0{
\stlin(\Phi, A, X) \ar[r]^\delta \ar[d]_{\stmap} & \stlin(\Phi, A) \ar[d]^\stmap \\
G(\Phi, X) \ar[r]^\delta & G(\Phi, A).
}\tag{*}\label{*}\]
for any crossed module \(\delta \colon X \to A\) over an absolute object in \(\mathcal A\). It is easy to see using lemma \ref{k2-centrality} that there is an action of \(G(\Phi, X \rtimes A) = G(\Phi, X) \rtimes G(\Phi, A)\) on \(\stlin(\Phi, X \rtimes A) = \Ker(p_{1*}) \rtimes \stlin(\Phi, A)\) given by
\[\up{(g \rtimes u)}{(x \rtimes a)} = \bigl(\up g{(\up ux)}\, \langle g, \up u a \rangle\bigr) \rtimes \up u a,\]
for some actions of \(G(\Phi, A)\) and \(G(\Phi, X)\) on \(\Ker(p_{1*})\) and a map \(\langle -, = \rangle \colon G(\Phi, X) \times \stlin(\Phi, A) \to \Ker(p_{1*})\) satisfying
\begin{align*}
\up u{(\up ax)} &= \up{\up u a}{(\up u x)}, &
\delta(\up gx) &= \up{\delta(g)}{\delta(x)}, &
\stmap(\up gx) &= \up g{\stmap(x)},
\\
\up u{(\up gx)} &= \up{\up u g}{(\up u x)}, &
\delta(\up ux) &= \up u{\delta(x)}, &
\stmap(\up ux) &= \up u{\stmap(x)},
\\
\langle g, ab \rangle &= \langle g, a \rangle\, \up a{\langle g, b \rangle}, &
\delta(\langle g, a \rangle) &= \up{\delta(g)}a\, a^{-1}, &
\stmap(\langle g, a \rangle) &= g\, \up{\stmap(a)}{g^{-1}},
\\
\langle gh, a \rangle &= \up g{\langle h, a \rangle}\, \langle g, a \rangle, &
\langle \stmap(x), a \rangle &= x\, \up a{x^{-1}}, &
\up xy &= \up{\stmap(x)}y,
\\
\up u{\langle g, a \rangle} &= \langle \up u g, \up u a \rangle, &
\langle g, a \rangle\, \up a{(\up g x)} &= \up g{(\up a x)}\, \langle g, a \rangle, &
\up ax &= \up{\stmap(a)}x.
\end{align*}
for \(x, y \in \Ker(p_{1*})\), \(a, b \in \stlin(\Phi, A)\), \(g, h \in G(\Phi, X)\), \(u, v \in G(\Phi, A)\).

By lemma \ref{local-actions}, for any prime ideal \(\mathfrak p \leqt K\) the groups \(G(\Phi, X_{\mathfrak p})\) and \(G(\Phi, A_{\mathfrak p})\) weakly act on \(\Ker\bigl(p_{1*}^{(\infty, \mathfrak p)}\bigr)\) and for any \(g \in G(\Phi, X_{\mathfrak p})\) there is a morphism \(\langle g, {-} \rangle \colon \stlin\bigl(\Phi, A^{(\infty, \mathfrak p)}\bigr) \to \Ker\bigl(p_{1*}^{(\infty, \mathfrak p)}\bigr)\) of pro-sets satisfying the same \(15\) identities. Moreover,
\[\up g{\can(x)} = \can(\up gx), \enskip \up u{\can(x)} = \can(\up ux), \enskip \langle g, \can(a) \rangle = \can(\langle g, a \rangle)\]
for any \(g \in G(\Phi, X)\) and \(u \in G(\Phi, A)\), where \(\can \colon \Ker\bigl(p_{1*}^{(\infty, \mathfrak p)}\bigr) \to \Ker(p_{1*})\) and \(\can \colon \stlin\bigl(\Phi, A^{(\infty, \mathfrak p)}\bigr) \to \stlin(\Phi, A)\) are the canonical morphisms.

\begin{theorem} \label{st-x-square}
Let \(\delta \colon X \to A\) be a crossed module over an absolute object in \(\mathcal A\). Then there are a unique action of \(G(\Phi, A)\) on \(\stlin(\Phi, A, X)\) and a unique crossed pairing
\[\langle {-}, {=} \rangle \colon G(\Phi, X) \times \stlin(\Phi, A) \to \stlin(\Phi, A, X)\]
such that the action of \(\stlin(\Phi, A)\) on \(\stlin(\Phi, A, X)\) factors through the action of \(G(\Phi, A)\) and (\ref{*}) is a crossed square. This crossed square is natural on \(\delta \colon X \to A\).
\end{theorem}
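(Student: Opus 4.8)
The plan is to transport the structure that lemma~\ref{k2-centrality} already puts on \(\stlin(\Phi, X \rtimes A)\) down to the quotient \(\stlin(\Phi, A, X)\) and then to read the crossed square axioms off the fifteen identities collected before the statement. The crossed module action of \(G(\Phi, X \rtimes A) = G(\Phi, X) \rtimes G(\Phi, A)\) on \(\stlin(\Phi, X \rtimes A) = \Ker(p_{1*}) \rtimes \stlin(\Phi, A)\) restricts to the actions \(\up u{({-})}\) and \(\up g{({-})}\) on \(\Ker(p_{1*})\) and produces the pairing through \(\up{(g \rtimes 1)}{d_*(a)} = \langle g, a \rangle \rtimes a\). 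First I would check these descend. Since \(p_1\) and \(p_2\) are morphisms of absolute objects, \(p_{1*}\) and \(p_{2*}\) are morphisms of crossed modules over \(G(\Phi, A)\); because \(p_i \circ d = \id\) and \(g \in G(\Phi, X)\) lies in the kernel of the map induced by \(p_1\), both \(1 \rtimes u\) and \(g \rtimes 1\) preserve \(\Ker(p_{1*})\) and \(\Ker(p_{2*})\) (their images under the maps induced by \(p_1\) and \(p_2\) are \(u, u\) and \(1, \delta(g)\)), hence preserve \([\Ker(p_{1*}), \Ker(p_{2*})]\) and act on \(\stlin(\Phi, A, X)\). The same computation shows \(p_{2*}\) kills this commutator, so it descends to \(\delta \colon \stlin(\Phi, A, X) \to \stlin(\Phi, A)\) and \(\stmap\) descends to \(\widehat\mu\); the factoring of the \(\stlin(\Phi, A)\)-action through \(G(\Phi, A)\), which also makes \(\stlin(\Phi, A, X)\) a crossed module over \(G(\Phi, A)\), is exactly the identity \(\up a x = \up{\stmap(a)}x\).

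With the maps in hand I would verify the crossed square axioms by matching them against the fifteen identities. The two boundary formulae for the pairing are the identities \(\delta(\langle g, a\rangle) = \up{\delta(g)}a\,a^{-1}\) and \(\stmap(\langle g, a\rangle) = g\,\up{\stmap(a)}{g^{-1}}\); multiplicativity of \(h\) in its second argument and the equality \(\langle \stmap(x), a\rangle = x\,\up a{x^{-1}}\) give the corresponding axioms, and the equivariance of \(\widehat\mu\), \(\widehat\nu\), \(h\) is read off the remaining identities (using throughout \(\up a{({-})} = \up{\stmap(a)}{({-})}\)). The one genuinely non-formal point is multiplicativity of \(h\) in its first argument together with the matching half of the last axiom: the identity \(\langle gg', a\rangle = \up{g}{\langle g', a\rangle}\,\langle g, a\rangle\) carries the \(G(\Phi, X)\)-action \(\up g{({-})}\), while the crossed square demands the \(G(\Phi, A)\)-action \(\up{\delta(g)}{({-})}\). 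These two actions differ by an element of \(G(\Phi, X \rtimes A)\) whose image under the homomorphism induced by \(p_2\) is trivial, so the whole point collapses to showing that this kernel acts trivially on \(\stlin(\Phi, A, X)\).

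This is the step I expect to be the main obstacle, and the one that forces localization. For a kernel element of the form \(\stmap(\tilde w)\) with \(\tilde w \in \Ker(p_{2*})\) triviality is immediate, since \(\tilde w z \tilde w^{-1} z^{-1} \in [\Ker(p_{2*}), \Ker(p_{1*})]\) dies in the quotient; but the group-theoretic kernel is far larger than this elementary part. To reach a general element I would test the offending identity after precomposing its \(\stlin(\Phi, A)\)-argument with the canonical morphisms \(\can \colon \stlin(\Phi, A^{(\infty, \mathfrak p)}) \to \stlin(\Phi, A)\), which are jointly epimorphic by lemma~\ref{costalks} applied with \(S = \{1\}\) (here \(A^{(\infty, \{1\})} \cong A\)). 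By the compatibility of the actions and the pairing with \(\can\) recorded before the statement, this reduces the identity to the corresponding one over each Steinberg pro-group \(\stlin(\Phi, A^{(\infty, \mathfrak p)})\), where lemma~\ref{local-actions} supplies a weak action of the much larger group \(G(\Phi, A_{\mathfrak p})\). Since \(G(\Phi, A_{\mathfrak p})\) has a Gauss decomposition and is generated by the \(D_\alpha(\Phi, A_{\mathfrak p})\), the actions \(\up g{({-})}\) and \(\up{\delta(g)}{({-})}\) coincide there, and the identity follows after gluing by joint epimorphism.

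Uniqueness follows by the same mechanism: after precomposition with the jointly epimorphic \(\can\)'s, both the action and the pairing are governed by the weak action of \(G(\Phi, A_{\mathfrak p})\), which lemma~\ref{local-actions} provides \emph{uniquely} subject to \(D_\alpha\)-equivariance, so the absolute action and pairing are themselves unique (the pairing being pinned down on the generators \(x_\alpha(p)\) of \(\stlin(\Phi, A)\) by its boundary values and second-variable multiplicativity). Naturality on \(\delta \colon X \to A\) is routine: the kernel \(\Ker(p_{1*})\), the projections \(p_{1*}\) and \(p_{2*}\), the two actions, and the pairing are all functorial in the crossed module, and any morphism of crossed modules commutes with the \(\can\)'s, so it carries every ingredient above to its counterpart, in particular the crossed pairing to the crossed pairing.
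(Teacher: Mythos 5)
Your overall architecture matches the paper's --- descend the \(G(\Phi, X \rtimes A)\)-structure on \(\Ker(p_{1*})\) to the quotient by \(N = [\Ker(p_{1*}), \Ker(p_{2*})]\), isolate the discrepancy between the \(G(\Phi, X)\)-action \(\up g{({-})}\) and the \(G(\Phi, A)\)-action \(\up{\delta(g)}{({-})}\), and resolve it by localizing at primes via lemmas \ref{costalks} and \ref{local-actions} --- and your observation that the \(G(\Phi, X \rtimes A)\)-action preserves both kernels (hence \(N\)) is a clean way to get the descent. But there are genuine gaps. First, your reduction of the non-formal point to ``\(\Ker(G(\Phi, p_2))\) acts trivially on \(\stlin(\Phi, A, X)\)'' captures only the congruence \(\up{\delta(g)}x \equiv \up gx \pmod N\); the axiom \(h(m, \widehat\nu(l)) = \up{\mu(m)}l\, l^{-1}\), i.e. \(\langle g, \delta(x)\rangle\, x \equiv \up gx \pmod N\), is \emph{not} among the fifteen identities and does not follow from that triviality (the list only contains its mirror \(\langle \stmap(x), a\rangle = x\, \up a{x^{-1}}\)); the paper runs a second, separate localization argument for exactly this identity, and your proposal has no counterpart. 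Second, even for the first congruence the decisive step is asserted rather than proved: generation of \(G(\Phi, A_{\mathfrak p})\) by the \(D_\alpha\) only reduces to \(g \in D_\alpha(\Phi, X_{\mathfrak p})\); one must still reduce the variable \(x\) to the subgroups \(\stlin(\Sigma, X^{(\infty, \mathfrak p)})\) for thick \(\alpha\)-series \(\Sigma\) (via lemma \ref{root-elimination} and the conjugation identities), because only there is \(\stmap\) a monomorphism, so that the two sides can be compared through their images in \(G\). ``The actions coincide there because of the Gauss decomposition'' skips precisely this mechanism.

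Third, the uniqueness argument does not work as stated. A competing action and pairing are assumed only to satisfy the crossed square axioms; they carry no compatibility with the canonical morphisms from \(\stlin(\Phi, A^{(\infty, \mathfrak p)})\), so you cannot invoke the uniqueness clause of lemma \ref{local-actions}, which is uniqueness subject to \(D_\alpha\)-equivariance of a diagram the competitor need not fit into. The paper's uniqueness is purely group-theoretic: \(\stmap\)-equivariance pins the action down modulo \(\Ker(\stmap \colon \stlin(\Phi, A, X) \to G(\Phi, X))\), the crossed square axioms force the \(\stlin(\Phi, A)\)-action to centralize that kernel, and perfectness (of \(\stlin(\Phi, A)\) and \(\stlin(\Phi, X \rtimes A)\), giving \(\stlin(\Phi, A, X) = [\stlin(\Phi, A), \stlin(\Phi, A, X)]\)) then determines the action on commutators \([\up ga, \up gx]\) and the pairing via the expansion of \(\langle g, [a, b] \rangle\) unambiguously. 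Your uniqueness paragraph would need to be replaced by an argument of this kind.
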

\begin{proof}
We show that the crossed square structure is given by the action of \(G(\Phi, A)\) on \(\Ker(p_{1*})\) and the map \(\langle {-}, {=} \rangle \colon G(\Phi, X) \times \stlin(\Phi, A) \to \Ker(p_{1*})\) (so it is clearly natural on \(\delta \colon X \to A\)). Let \(N = [\Ker(p_{1*}), \Ker(p_{2*})] \leq \Ker(p_{1*})\), so \(\stlin(\Phi, A, X) = \Ker(p_{1*}) / N\). It is easy to see that \(N\) is generated by the elements \(\up x y\, \up{\delta(x)}{y^{-1}}\) for \(x, y \in \Ker(p_{1*})\), it is \(G(\Phi, A)\)-invariant.

In order to show that \(N\) is \(G(\Phi, X)\)-invariant we check that
\[\up{\delta(g)}x \equiv \up gx \pmod N\]
for \(g \in G(\Phi, X)\) and \(x \in \Ker(p_{1*})\). Since both sides are homomorphisms on \(x\), by lemma \ref{costalks} it suffices to check that
\[\up{\delta(g)}x \equiv \up gx \pmod{N^{(\infty, \mathfrak p)}},\]
for any prime ideal \(\mathfrak p \leqt K\), \(g \in G(\Phi, X_{\mathfrak p})\), and a formal variable \(x \in \Ker\bigl(p_{1*}^{(\infty, \mathfrak p)}\bigr)\). Here \(N^{(\infty, \mathfrak p)} \leq \Ker\bigl(p_{1*}^{(\infty, \mathfrak p)}\bigr)\) is the sub-pro-group generated by \(\up x y\, \up{\delta(x)}{y^{-1}}\), it is clearly \(\stlin\bigl(\Phi, A^{(\infty, \mathfrak p)}\bigr)\)-invariant (and, in particular, normal). It suffices to check the identity only for generators \(g \in G(\Phi, X_{\mathfrak p})\). Using the identities \(\up{\up ug}x = \up u{\bigl(\up g {\bigl(\up{u^{-1}}x\bigr)} \bigr)}\), \(\up{\up u{\delta(g)}}x = \up u{\bigl(\up{\delta(g)}{\bigl(\up{u^{-1}}x\bigr)} \bigr)}\) and lemma \ref{local-actions} applied to \(X \rtimes A\) and \(A\) we may assume that \(g \in D_\alpha(\Phi, X_{\mathfrak p})\), where \(D_\alpha(\Phi, (X \rtimes A)_{\mathfrak p}) = D_\alpha(\Phi, X_{\mathfrak p}) \rtimes D_\alpha(\Phi, A_{\mathfrak p})\). Using the identities \(\up g{\bigl(\up ax\bigr)} \equiv \up{\delta(\langle g, a \rangle)\, a}{\bigl(\up gx\bigr)} \pmod{N^{(\infty, \mathfrak p)}}\), \(\up{\delta(g)}{\bigl(\up ax\bigr)} = \up{\delta(\langle g, a \rangle)\, a}{\bigl(\up{\delta(g)}x\bigr)}\) and lemma \ref{root-elimination} applied to \(X \rtimes A\) and \(A\), we may also assume that \(x\) has the domain \(\stlin\bigl(\Sigma, X^{(\infty, \mathfrak p)}\bigr)\) for a thick \(\alpha\)-series \(\Sigma\). But now both sides coincide in \(\stlin\bigl(\Sigma, X^{(\infty, \mathfrak p)}\bigr) \leq \Ker\bigl(p_{1*}^{(\infty, \mathfrak p)}\bigr)\), since their images under \(\stmap\) are the same.

In the same way we check that
\[\langle g, \delta(x) \rangle\, x \equiv \up gx \pmod N\]
for \(g \in G(\Phi, X)\) and \(x \in \Ker(p_{1*})\). It is easy to see that both sides are homomorphisms on \(x\) modulo \(N\). By lemma \ref{costalks} is suffices to check that
\[\langle g, \delta(x) \rangle\, x \equiv \up gx \pmod{N^{(\infty, \mathfrak p)}}\]
for a prime ideal \(\mathfrak p \leqt K\), \(g \in G(\Phi, X_{\mathfrak p})\), and a formal variable \(x \in \Ker\bigl(p_{1*}^{(\infty, \mathfrak p)}\bigr)\). Here both sides are also morphisms of pro-groups \(\Ker\bigl(p_{1*}^{(\infty, \mathfrak p)}\bigr) \to \Ker\bigl(p_{1*}^{(\infty, \mathfrak p)}\bigr) / N^{(\infty, \mathfrak p)}\) for any \(g\). Since \(\langle gh, \delta(x) \rangle\, x = \up g{\bigl(\langle h, \delta(x) \rangle\, x\bigr)}\, \up g{x^{-1}}\, \langle g, \delta(x) \rangle\, x\), \(N^{(\infty, \mathfrak p)}\) is \(G(\Phi, X_{\mathfrak p})\)-invariant, and \(\langle \up ug, \delta(x) \rangle\, x = \up u{\bigl(\bigl\langle g, \delta\bigl(\up{u^{-1}}x\bigr) \bigr\rangle\, \up{u^{-1}}x\bigr)}\), we may apply lemma \ref{local-actions} and assume that \(g \in D_\alpha(X_{\mathfrak p})\). Using the identity \(\langle g, \delta(\up ax) \rangle\, \up ax \equiv \up{\delta(\langle g, a \rangle)\, a}{\bigl(\langle g, \delta(x) \rangle\, x \bigr)} \pmod{N^{(\infty, \mathfrak p)}}\), we may also assume that \(x \in \stlin\bigl(\Sigma, X^{(\infty, \mathfrak p)}\bigr)\) for a thick \(\alpha\)-series \(\Sigma\). Now again both sides coincide in \(\stlin\bigl(\Sigma, X^{(\infty, \mathfrak p)}\bigr)\) since their images under \(\stmap\) are the same.

In easily follows that all actions and the map \(\langle {-}, {=} \rangle\) are well-defined for (\ref{*}) and they form a crossed square.

It remains to check the uniqueness. The action of \(G(\Phi, A)\) on \(\stlin(\Phi, A, X)\) is unique modulo the kernel of \(\stmap \colon \stlin(\Phi, A, X) \to G(\Phi, X)\) and \(\stlin(\Phi, A)\) centralizes this kernel by the axioms of crossed squares. Moreover,
\[\stlin(\Phi, A, X) = [\stlin(\Phi, A), \stlin(\Phi, A, X)]\]
inside \(\stlin(\Phi, A, X) \rtimes \stlin(\Phi, A)\) (since \(\stlin(\Phi, A \rtimes X)\) is perfect) and \(\up g{[a, x]} = [\up ga, \up gx]\) for any \(g \in G(\Phi, A)\), \(a \in \stlin(\Phi, A)\), \(x \in \stlin(\Phi, A, X)\). It follows that the action of \(G(\Phi, A)\) on \(\stlin(\Phi, A, X)\) is unique.

Take any \(g \in G(\Phi, X)\), \(a \in \stlin(\Phi, A)\), \(b \in \stlin(\Phi, A)\) and let \(u = \langle g, a \rangle\), \(v = \langle g, b \rangle\). We have
\[\langle g, [a, b] \rangle = u \bigl(\up{\stmap(a)}{v}\bigr) \bigl(\up{\stmap(aba^{-1})}{u^{-1}}\bigr) \bigl(\up{\stmap([a, b])}{v^{-1}}\bigr).\]
Both \(u\) and \(v\) are uniquely determined modulo the kernel of \(\stmap \colon \stlin(\Phi, A, X) \to G(\Phi, X)\). But this implies that \(\langle g, [a, b] \rangle\) is uniquely determined since
\begin{align*}
w \bigl(\up{\stmap(a)}v\bigr) \bigl(\up{\stmap(aba^{-1})}{w^{-1}}\bigr) &= w \bigl(\up{\stmap(a) \delta(g) \stmap(b) \delta(g)^{-1} \stmap(a)^{-1}}{w^{-1}}\bigr) \bigl(\up{\stmap(a)}v\bigr) \\
&= \bigl\langle \stmap(w), \up{\stmap(a) \delta(g)}{b} \bigr\rangle \bigl(\up{\stmap(a)}v\bigr) = \up{\stmap(a)}v, \\
w \bigl(\up{\stmap(ba^{-1})}{u^{-1}}\bigr) \bigl(\up{\stmap(b a^{-1} b^{-1})}{w^{-1}}\bigr) &= w \bigl(\up{\stmap(b) \delta(g) \stmap(a^{-1}) \delta(g)^{-1} \stmap(b^{-1})}{w^{-1}}\bigr) \bigl(\up{\stmap(ba^{-1})}{u^{-1}}\bigr) \\
&= \bigl\langle \stmap(w), \up{\stmap(b) \delta(g)}{a^{-1}} \bigr\rangle \bigl(\up{\stmap(ba^{-1})}{u^{-1}}\bigr) = \up{\stmap(ba^{-1})}{u^{-1}}
\end{align*}
for any \(w \in \stlin(\Phi, A, X)\) in the kernel of \(\stmap\). The group \(\stlin(\Phi, A)\) is perfect, so the crossed pairing is unique.
\end{proof}

\section{Cosheaf properties}

By theorems \ref{st-pro-group} and \ref{st-x-square}, there is a crossed square
\[\xymatrix@R=30pt@C=72pt@!0{
\stlin\bigl(\Phi, A^{(\infty, k)}\bigr) \ar[r]^\delta \ar[d]_{\stmap} & \stlin(\Phi, A) \ar[d]^\stmap \\
G\bigl(\Phi, A^{(\infty, k)}\bigr) \ar[r]^\delta & G(\Phi, A)
}\]
natural on \(k \in K\) and \(A\). In particular, \(\stlin(\Phi, A^{(\infty, k)})\) are crossed modules over \(G(\Phi, A)\) in a canonical way.

For any morphism \(k \to k'\) in \(\mathcal K\) we denote the morphisms \(A^{(\infty, k')} \to A^{(\infty, k)}\), \(P_\alpha\bigl(A^{(\infty, k')}\bigr) \to P_\alpha\bigl(A^{(\infty, k)}\bigr)\), \(G\bigl(\Phi, A^{(\infty, k')}\bigr) \to G\bigl(\Phi, A^{(\infty, k)}\bigr)\), and \(\stlin\bigl(\Phi, A^{(\infty, k')}\bigr) \to \stlin\bigl(\Phi, A^{(\infty, k)}\bigr)\) by \(\can^{k'}_k\). The next result shows that \(P_\alpha\bigl(A^{(\infty, k)}\bigr)\) are cosheaves of abelian pro-groups unless \(2 \alpha \in \Phi\).

\begin{lemma} \label{param-cosheaf}
Let \(k_i\), \(t_i\), and \(s\) be elements of \(K\) for \(1 \leq i \leq n\) such that \(s \in \sum_{i = 1}^n k_i K\). The pro-group \(P_\alpha\bigl(A^{(\infty, s)}\bigr)\) is generated by \(\can^{sk_i}_s\) with the relations
\begin{itemize}
\item \(\bigl[\can^{sk_i}_s(a), \can^{sk_j}_s(b)\bigr]^\cdot = 0\) for all \(i\) and \(j\);
\item \(\can^{sk_i}_s(a + b) = \can^{sk_i}_s(a) \dotplus \can^{sk_i}_s(b)\);
\item \(\can^{sk_i}_s\bigl(\can^{sk_i k_j}_{sk_i}(a)\bigr) = \can^{sk_j}_s\bigl(\can^{sk_i k_j}_{sk_j}(a)\bigr)\) for \(i \neq j\).
\end{itemize}
unless either \(2 \alpha \in \Phi\) or \(\frac \alpha 2 \in \Phi\). In the case \(2 \alpha \in \Phi\) the pro-group \(P_\alpha\bigl(A^{(\infty, s)}\bigr)\) is generated by \(\mathrm{can}^{sk_i}_s\) with the relations
\begin{itemize}
\item \(\bigl[\can^{sk_i}_s(u), \can^{sk_j}_s(v)\bigr]^\cdot = \can^{sk_i}_s\bigl(\phi\bigl(\inv{\can^{sk_j}_1(\pi(v))}\, \pi(u)\bigr)\bigr)\) for all \(i\) and \(j\);
\item \(\can^{sk_i}_s(u \dotplus v) = \can^{sk_i}_1(u) \dotplus \can^{sk_i}_s(v)\);
\item \(\can^{sk_i}_s\bigl(\can^{sk_i k_j}_{sk_i}(u)\bigr) = \can^{sk_j}_s\bigl(\can^{sk_i k_j}_{sk_j}(u)\bigr)\) for \(i \neq j\).
\end{itemize}
\end{lemma}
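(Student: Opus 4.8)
The plan is to make the parameter pro-groups completely explicit and then verify the universal property of Lemma~\ref{universal-pro-group}, constructing an inverse to the canonical comparison morphism. First I would unwind the definitions. In the generic case (neither \(2\alpha\) nor \(\tfrac\alpha2\) in \(\Phi\)) the group \(P_\alpha(A)\) is an honest \(K\)-module \(M\) (namely \(A\), or \(e_iAe_j\), or the appropriate piece in the unitary case), the homotope only alters the multiplication, and the transition map \(A^{(s^{n+1})}\to A^{(s^n)}\) acts on parameters as multiplication by \(s\). Hence \(P_\alpha\bigl(A^{(\infty,s)}\bigr)\) is, as a pro-abelian-group, the formal limit \(\varprojlim^{\Pro}(\cdots\xrightarrow{s}M\xrightarrow{s}M)\), and \(\can^{sk_i}_s\) is multiplication by \(k_i^n\) at stage \(n\) (more generally \(\can^{k'}_k\) with \(k'=kk''\) is multiplication by \((k'')^n\) at stage \(n\)). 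In the case \(2\alpha\in\Phi\) the pro-group \(P_\alpha\bigl(A^{(\infty,s)}\bigr)\) is the corresponding two-step nilpotent object; here the linear part \(\pi\) and the quadratic part \(\rho\) of the odd form algebra scale with different powers under the homotope maps, which is exactly what the central \(\phi\)-correction terms in the second list of relations record.

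Second, I would check that all listed relations hold in \(P_\alpha\bigl(A^{(\infty,s)}\bigr)\), which by Lemma~\ref{universal-pro-group} produces a canonical morphism \(\kappa\colon Q\to P_\alpha\bigl(A^{(\infty,s)}\bigr)\) from the presented pro-group \(Q=\varprojlim^{\Pro}_n Q_n\). Additivity is functoriality of \(\can\); the commutator relation is vacuous in the abelian case and is the Heisenberg identity \(v\dotplus u=u\dotplus\phi(\inv{\pi(u)}\,\pi(v))\dotplus v\) in the \(2\alpha\) case; and the gluing relation holds because both composites \(\can^{sk_i}_s\circ\can^{sk_ik_j}_{sk_i}\) and \(\can^{sk_j}_s\circ\can^{sk_ik_j}_{sk_j}\) equal the single canonical morphism \(\can^{sk_ik_j}_s\) attached to \(s\to sk_ik_j\) in \(\mathcal K\) (using \(k_ik_j=k_jk_i\) and contravariant functoriality of the homotope construction).

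Third, I would construct the inverse pro-morphism \(\theta\colon P_\alpha\bigl(A^{(\infty,s)}\bigr)\to Q\). The arithmetic input is the covering hypothesis \(s\in\sum_i k_iK\): if \(r\) is the number of the \(k_i\), then by the pigeonhole principle every degree-\(L\) monomial in the \(k_i\) with \(L\ge r(n-1)+1\) is divisible by some \(k_i^n\), so \(s^L\in\sum_i k_i^nK\) and one may write \(s^L=\sum_i k_i^n b_i\). I would use these \(b_i\) to define, from stage \(n+L\) of the source into stage \(n\) of \(Q\), the map sending \(a\in M\) to \(\sum_i\) (the generator of \(Q_n\) coming from \(b_ia\) in the \(i\)-th chart), and then verify that these assemble into a pro-morphism \(\theta\) with \(\kappa\theta=\id\): the composite at stage \(n\) is multiplication by \(s^L\), which is precisely the transition map of the source and therefore represents the identity of \(\varprojlim^{\Pro}M\).

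The main obstacle is the well-definedness of \(\theta\) together with the reverse identity \(\theta\kappa=\id_Q\). Concretely one must show that a formal combination \(\sum_i\) (generator from \(b_ia\) in chart \(i\)) with \(\sum_i k_i^n b_i=0\) becomes trivial after finitely many transition maps in \(Q\), using the gluing relations \(\can^{sk_i}_s\can^{sk_ik_j}_{sk_i}=\can^{sk_j}_s\can^{sk_ik_j}_{sk_j}\) to redistribute terms between charts. This is the cosheaf coherence dual to the Zariski sheaf property of the parameter functor \(P_\alpha(-)\), and it is where the covering condition is used a second time; after passing to deep enough levels I expect it to reduce to the relation \(\sum_i k_i^n b_i=0\) combined with the sheaf identities among the \(k_i\). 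The case \(2\alpha\in\Phi\) follows the same skeleton, with the additional bookkeeping of the \(\phi\)-valued correction terms tracked through the odd form algebra axioms.
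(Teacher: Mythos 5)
Your proposal follows essentially the same route as the paper: identify \(P_\alpha\bigl(A^{(\infty,s)}\bigr)\) explicitly as \(\varprojlim^{\Pro}(\cdots \xrightarrow{s} M \xrightarrow{s} M)\) with \(\can^{sk_i}_s\) acting by \(k_i^m\) at level \(m\), map the presented pro-group to it via Lemma~\ref{universal-pro-group}, and construct the level-wise inverse \(a^{(s^{m'+m})} \mapsto \sum_i (a t_{im})^{(s^m k_i^m)}\) from a partition of unity \(s^{m'} = \sum_i k_i^m t_{im}\) obtained by the pigeonhole argument, with the extra \(\phi\)- and \(\rho\)-terms in the ultrashort case. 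The one step you leave as an expectation --- that the composite back into the presented pro-group equals its transition map --- is a short single-level computation from the gluing relation (which at level \(m\) reads \(\can^{sk_i}_s(k_j^m b) = \can^{sk_j}_s(k_i^m b)\)) together with additivity, exactly the mechanism you name, so there is no genuine gap.
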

\begin{proof}
Recall that a formal projective limit of maps \(u_i \colon X_i \to Y_i\) is an isomorphism in \(\Pro(\Set)\) if and only if for any \(i\) there are an index \(i'\), a morphism \(m \colon i' \to i\), and a map \(v_i \colon Y_{i'} \to X_i\) such that \(u_i \circ v_i = Y_m \colon Y_{i'} \to Y_i\) and \(v_i \circ u_{i'} = X_m \colon X_{i'} \to X_i\) (so \(v_i\) are components of the inverse morphism in \(\Pro(\Set)\)).

Now fix a root \(\alpha\). Let \(G = \varprojlim_{m \in \mathbb N} G_m\) be the pro-group with the presentation from the statement constructed in lemma \ref{universal-pro-group} as a formal projective limit of groups with given presentations. There is an obvious morphism \(G \to P_\alpha\bigl(A^{(\infty, s)}\bigr)\), its inverse consists of the group homomorphisms
\[a^{(s^{m' + m})} \mapsto \sum_{i = 1}^n (at_{im})^{(s^m k_i^m)}\]
unless \(2 \alpha \in \Phi\) or \(\frac \alpha 2 \in \Phi\) and of
\begin{align*}
u^{(s^{m' + m})} &\mapsto \sum_{1 \leq i \leq n}^\cdot (u \cdot t_{im})^{(s^m k_i^m)} \dotplus \sum_{1 \leq i < j \leq n}^\cdot \phi\bigl( (\rho(u) s^m t_{im} t_{jm} k_j^m)^{(s^m k_i^m)} \bigr), \\
\phi\bigl(a^{(s^{m' + m})}\bigr) &\mapsto \sum_{1 \leq i \leq n}^\cdot \phi\bigl((a t_{im})^{(s^m k_i^m)}\bigr)
\end{align*}
for \(2 \alpha \in \Phi\), where \(m' = \max(0, (m - 1) n + 1)\) and \(s^{m'} = \sum_{i = 1}^n k_i^m t_{im}\) for some \(t_{im} \in K\).
\end{proof}

In particular, \(K^{(\infty, k)}\) form a cosheaf of abelian pro-groups. If \(K\) is an integral domain, then any \(K^{(\infty, k)}\) is a sub-pro-ring of \(K\) (since we may identify each \(K^{(s)}\) with \(Ks\)) and \(K / K^{(\infty, k)} = \varprojlim^{\Pro}_n K / K k^n\) is the formal pro-completion at \(k\). For example, in the case \(K = \mathbb C[x, y]\) the continuation of this cosheaf at \(\Spec(K_x) \cup \Spec(K_y)\) is the sub-pro-group \(K^{(\infty, x)} + K^{(\infty, y)} \leq K\) and \(K / \bigl(K^{(\infty, x)} + K^{(\infty, y)}\bigr) = \varprojlim^{\Pro}_n K / (K x^n + K y^n)\) is the pro-group of formal power series in two variables. Recall that the value of the sheaf of localizations (i.e. the structure sheaf of the scheme \(\Spec(K)\)) on this subset coincides with \(K\).

It is easy to see that a formal projective limit of maps \(u_i \colon X_i \to Y_i\) is an epimorphism in \(\Pro(\Set)\) if for any \(i\) there are an index \(i'\) and a morphism \(m \colon i' \to i\) such that the image of \(Y_m \colon Y_{i'} \to Y_i\) is contained in the image of \(u_i\) (this is actually the description of \textit{regular epimorphisms}).

\begin{theorem} \label{st-pro-gluing}
Let \(A\) be an absolute object in \(\mathcal A\) and \(k_i\), \(s\) be elements of \(K\) for \(1 \leq i \leq n\) such that \(s \in \sum_{i = 1}^n k_i K\). Then the pro-group \(\stlin\bigl(\Phi, A^{(\infty, s)}\bigr)\) is generated by \(\can^{sk_i}_s \colon \stlin\bigl(\Phi, A^{(\infty, sk_i)}\bigr) \to \stlin\bigl(\Phi, A^{(\infty, s)}\bigr)\) with the relations
\begin{itemize}
\item \(\can^{sk_i}_s(gh) = \can^{sk_i}_s(g)\, \can^{sk_i}_s(h)\);
\item \(\can^{sk_i}_s(\can^{sk_i k_j}_{sk_i}(g)) = \can^{sk_j}_s(\can^{sk_i k_j}_{sk_j}(g))\) for \(i \neq j\);
\item \(\up{\can^{sk_i}_s(g)}{\can^{sk_j}_s(h)} = \can^{sk_j}_s\bigl(\up{\can^{sk_i}_1(g)}{h}\bigr)\) for \(i \neq j\);
\end{itemize}
where in the last relation the group \(\stlin(\Phi, A)\) canonically strongly acts on \(\stlin\bigl(\Phi, A^{(\infty, sk_j)}\bigr)\).
\end{theorem}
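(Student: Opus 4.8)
The plan is to show that the canonical morphism $\pi \colon G \to \stlin(\Phi, A^{(\infty, s)})$ is an isomorphism, where $G$ is the pro-group with the displayed presentation supplied by lemma \ref{universal-pro-group}. First I would check that the three relations really hold in $\stlin(\Phi, A^{(\infty, s)})$, which is what produces $\pi$. The first two are formal consequences of the functoriality of $\can$, so the only one with content is the third. By theorem \ref{st-pro-group} the pro-group $\stlin(\Phi, A^{(\infty, s)})$ is the relative Steinberg group, hence a crossed module over $\stlin(\Phi, A)$, and conjugation satisfies the Peiffer identity $\up{x}{y} = \up{\delta(x)}{y}$. Since the structure map $\delta$ of $\stlin(\Phi, A^{(\infty, s)})$ over $\stlin(\Phi, A)$ is $\can^{s}_1$, functoriality of $\can$ gives $\delta(\can^{sk_i}_s(g)) = \can^{sk_i}_1(g)$; and since each $\can^{sk_j}_s$ is $\stlin(\Phi, A)$-equivariant (source and target both being relative Steinberg pro-groups), we obtain $\up{\can^{sk_i}_s(g)}{\can^{sk_j}_s(h)} = \up{\can^{sk_i}_1(g)}{\can^{sk_j}_s(h)} = \can^{sk_j}_s\bigl(\up{\can^{sk_i}_1(g)}{h}\bigr)$, which is exactly the third relation.

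To construct an inverse $\sigma \colon \stlin(\Phi, A^{(\infty, s)}) \to G$ I would use the presentation of $\stlin(\Phi, A^{(\infty, s)})$ by the root morphisms $x_\alpha$ subject to the Steinberg relations. For each $\alpha$ I must produce a morphism $\xi_\alpha \colon P_\alpha(A^{(\infty, s)}) \to G$ to play the role of $\sigma \circ x_\alpha$. Here lemma \ref{param-cosheaf} is the key input: it presents $P_\alpha(A^{(\infty, s)})$ in terms of the per-piece parameter groups $P_\alpha(A^{(\infty, sk_i)})$, so $\xi_\alpha$ is determined by the assignments $p_i \mapsto \can^{sk_i}_s(x_\alpha(p_i))$ once I verify that these respect the relations of lemma \ref{param-cosheaf}. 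Additivity holds inside each piece; the overlap relation is precisely relation $2$ of $G$ applied to $x_\alpha$; and the vanishing of the cross-piece additive commutators corresponds to the commuting of $x_\alpha$ on different pieces, which follows from additivity since $P_\alpha$ is abelian when $2\alpha \notin \Phi$. The case $2\alpha \in \Phi$ requires the second form of lemma \ref{param-cosheaf}, matching the $\phi$-correction terms against the action relation.

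It then remains to check that the $\xi_\alpha$ satisfy the Steinberg relations inside $G$, after which $\sigma$ exists by the universal property and one verifies $\sigma \circ \pi = \id$ and $\pi \circ \sigma = \id$ on generators, giving the isomorphism. Same-piece relations are automatic, since $x_\alpha(P_\alpha(A^{(\infty, sk_i)}))$ generates the image of the genuine Steinberg pro-group $\stlin(\Phi, A^{(\infty, sk_i)})$, and cross-piece relations between parallel roots reduce to the commuting already established. The heart of the argument, and the step I expect to be the \emph{main obstacle}, is the cross-piece Chevalley commutator formula: for roots $\alpha, \beta$ not anti-parallel and parameters drawn from pieces $i \neq j$, the commutator must be computed in $G$ by relation $3$, which rewrites $\up{\can^{sk_i}_s(x_\alpha(a))}{\can^{sk_j}_s(x_\beta(b))}$ as $\can^{sk_j}_s$ of the action of $x_\alpha(\can^{sk_i}_1(a)) \in \stlin(\Phi, A)$ on $x_\beta(b)$. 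One must then match the resulting product of root elements with the terms $x_{i\alpha + j\beta}\bigl(f_{\alpha\beta ij}(\can^{sk_i}_s(a), \can^{sk_j}_s(b))\bigr)$ expected in $\stlin(\Phi, A^{(\infty, s)})$; this is a parameter-level identity expressing compatibility of the maps $f_{\alpha\beta ij}$ with the homotope structure and the cosheaf morphisms $\can$, and the delicate point is that the output lands in the $j$-th piece with the correct homotope twist, the unitary case $2\alpha \in \Phi$ again demanding extra care with the $\phi$-terms of lemma \ref{param-cosheaf}.
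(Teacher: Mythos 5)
Your proposal is correct and follows essentially the same route as the paper: both reduce to the parameter-level presentation of lemma \ref{param-cosheaf}, identify the cross-piece Chevalley commutator formula as the only substantive check, and dispose of it using the third (conjugation) relation together with the crossed-module/strong-action structure from theorem \ref{st-pro-group}. The paper merely phrases the argument as an equivalence of two presentations (inserting an intermediate presentation by the generators \(x_\alpha \circ \can^{sk_i}_s\) and an explicit commutator-expansion identity) rather than as a pair of mutually inverse morphisms, which is the same proof.
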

\begin{proof}

By lemma \ref{param-cosheaf}, \(\stlin\bigl(\Phi, A^{(\infty, s)}\bigr)\) is generated by \(x^i_\alpha = x_\alpha \circ \can^{sk_i}_s\) for \(1 \leq i \leq n\) and \(\alpha \in \Phi\) with the relations
\begin{itemize}
\item \(x^i_\alpha(a \dotplus b) = x^i_\alpha(a)\, x^i_\alpha(b)\);
\item \(x^i_\alpha(a) = x^i_{2 \alpha}(a)\) for \(a \in P_{2 \alpha}\bigl(A^{(\infty, sk_i)}\bigr)\);
\item \(x^i_\alpha\bigl(\can^{sk_i k_j}_{sk_i}(a)\bigr) = x^j_\alpha\bigl(\can^{sk_i k_j}_{sk_j}(a)\bigr)\);
\item \([x^i_\alpha(a), x^j_\beta(b)] = \prod_{\substack{i \alpha + j \beta \in \Phi\\ i, j > 0}} x^i_{i \alpha + j \beta}\bigl(f_{\alpha \beta ij}\bigl(a, \can^{sk_j}_1(b)\bigr)\bigr)\) unless \(\alpha\) and \(\beta\) are anti-parallel.
\end{itemize}
Indeed, the Chevalley commutator formula from the standard presentation of \(\stlin\bigl(\Phi, A^{(\infty, s)}\bigr)\) easily reduces to these relations since \(\prod_{i = 1}^n P_\alpha\bigl(A^{(\infty, sk_i)}\bigr) \to P_\alpha\bigl(A^{(\infty, s)}\bigr), (a_i)_i \mapsto \sum^\cdot_i \can^{sk_i}_s(a_i)\) is an epimorphism of pro-sets and the group-theoretic identity
\begin{align*}
\bigl[\prod_{i = 1}^n g_i, \prod_{j = 1}^m h_j\bigr] &= \up{g_1 \cdots g_{n - 1}}{\bigl([g_n, h_1]\, \up{h_1}{[g_n, h_2]} \cdots \up{h_1 \cdots h_{m - 1}}{[g_n, h_m]}\bigr)} \\
&\quad \up{g_1 \cdots g_{n - 2}}{\bigl([g_{n - 1}, h_1]\, \up{h_1}{[g_{n - 1}, h_2]} \cdots \up{h_1 \cdots h_{m - 1}}{[g_{n - 1}, h_m]}\bigr)} \\
&\quad \ldots \\
&\quad \bigl([g_1, h_1]\, \up{h_1}{[g_1, h_2]} \cdots \up{h_1 \cdots h_{m - 1}}{[g_1, h_m]}\bigr)
\end{align*}
holds. On the other hand, this presentation is equivalent to the presentation from the statement since the relations from the statement hold for Steinberg pro-groups and imply the relations from the proof.
\end{proof}

\begin{theorem} \label{st-pro-cosheaf}
Let \(A\) be an absolute object in \(\mathcal A\). The precosheaf \(k \mapsto \stlin\bigl(\Phi, A^{(\infty, k)}\bigr)\) is a cosheaf of crossed pro-modules over \(G(\Phi, A)\) or \(\stlin(\Phi, A)\). In other words, for all \(k_i, s \in K\) for \(1 \leq i \leq n\) with the property \(s \in \sum_{i = 1}^n k_i K\) the crossed pro-module \(\stlin\bigl(\Phi, A^{(\infty, s)}\bigr)\) is the universal crossed pro-module with morphisms \(\can^{sk_i}_s \colon \stlin\bigl(\Phi, A^{(\infty, sk_i)}\bigr) \to \stlin(\bigl(\Phi, A^{(\infty, s)}\bigr)\) such that \(\can^{sk_i}_s \circ \can^{sk_i k_j}_{sk_i} = \can^{sk_j}_s \circ \can^{sk_i k_j}_{sk_j}\).
\end{theorem}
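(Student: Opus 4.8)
The plan is to deduce this directly from Theorem \ref{st-pro-gluing}, which already exhibits \(\stlin(\Phi, A^{(\infty, s)})\) by generators and relations as a pro-group. The entire content of the cosheaf property is then to recognize that presentation as the universal property of a colimit in the category of crossed pro-modules over \(B\), where \(B\) is either \(\stlin(\Phi, A)\) or \(G(\Phi, A)\). The crucial observation is that among the three families of relations in Theorem \ref{st-pro-gluing}, the first (each \(\can^{sk_i}_s\) is a homomorphism) and the third (the conjugation relation) are automatically satisfied in any crossed pro-module, so that only the second relation---the overlap compatibility \(\can^{sk_i}_s \circ \can^{sk_i k_j}_{sk_i} = \can^{sk_j}_s \circ \can^{sk_i k_j}_{sk_j}\)---survives as a genuine constraint. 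This is exactly the cocone condition defining the colimit.

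For the forward direction I would take an arbitrary crossed pro-module \(\delta_N \colon N \to B\) together with morphisms of crossed pro-modules \(\psi_i \colon \stlin(\Phi, A^{(\infty, sk_i)}) \to N\) satisfying \(\psi_i \circ \can^{sk_i k_j}_{sk_i} = \psi_j \circ \can^{sk_i k_j}_{sk_j}\), and verify that the \(\psi_i\) satisfy all three relations of Theorem \ref{st-pro-gluing}. The homomorphism relation is automatic and the overlap relation is the hypothesis. For the conjugation relation, note that the structure map of \(\stlin(\Phi, A^{(\infty, sk_i)})\) over \(B\) is \(\can^{sk_i}_1\) (or its image under \(\stmap\) when \(B = G(\Phi, A)\)), so \(\delta_N(\psi_i(g)) = \can^{sk_i}_1(g)\) since \(\psi_i\) is a morphism of crossed modules; the Peiffer identity in \(N\) then gives
\[\up{\psi_i(g)}{\psi_j(h)} = \up{\delta_N(\psi_i(g))}{\psi_j(h)} = \up{\can^{sk_i}_1(g)}{\psi_j(h)} = \psi_j\bigl(\up{\can^{sk_i}_1(g)}{h}\bigr),\]
the last step by \(B\)-equivariance of \(\psi_j\). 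Hence Theorem \ref{st-pro-gluing} produces a unique pro-group morphism \(\theta \colon \stlin(\Phi, A^{(\infty, s)}) \to N\) with \(\theta \circ \can^{sk_i}_s = \psi_i\).

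It then remains to check that \(\theta\) is a morphism of crossed pro-modules, which I would do on generators, since \(\stlin(\Phi, A^{(\infty, s)})\) is generated by the images of the \(\can^{sk_i}_s\). Compatibility with the structure maps follows from \(\delta_N(\theta(\can^{sk_i}_s(g))) = \delta_N(\psi_i(g)) = \can^{sk_i}_1(g) = \delta(\can^{sk_i}_s(g))\), using the factorization \(\can^{sk_i}_1 = \can^s_1 \circ \can^{sk_i}_s\) of structure maps. For \(B\)-equivariance, the equivariance of each \(\can^{sk_i}_s\) and of each \(\psi_i\) gives \(\theta(\up b{\can^{sk_i}_s(g)}) = \theta(\can^{sk_i}_s(\up b g)) = \psi_i(\up b g) = \up b{\psi_i(g)}\), and equivariance on generators propagates to products because \(\theta\) is a homomorphism and \(B\) acts by automorphisms. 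Uniqueness of \(\theta\) is immediate, since its values on the generating family are forced by \(\theta \circ \can^{sk_i}_s = \psi_i\). The two base cases \(B = \stlin(\Phi, A)\) and \(B = G(\Phi, A)\) are handled simultaneously once one notes that the action of \(\stlin(\Phi, A)\) on each \(\stlin(\Phi, A^{(\infty, k)})\) factors through \(\stmap\).

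The step I expect to be the main obstacle is confirming that a cocone of crossed-module morphisms \(\psi_i\) automatically satisfies the third relation of Theorem \ref{st-pro-gluing}: this is the only relation not built into the cocone data, and it is exactly where the crossed-module hypothesis on \(N\), through the Peiffer identity, is used. By contrast, the fact that \(\stlin(\Phi, A^{(\infty, s)})\) is itself a crossed pro-module is already known from Theorems \ref{st-pro-group} and \ref{st-x-square}, so no separate Peiffer-ization of the colimit is required, and the remaining verifications reduce to checking identities on the generating family \(\can^{sk_i}_s\).
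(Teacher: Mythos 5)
Your proposal is correct and follows essentially the same route as the paper: apply Theorem \ref{st-pro-gluing} to obtain the unique pro-group morphism and then verify that it is a morphism of crossed pro-modules by reducing to the generating family (the paper phrases the latter via the epimorphism of pro-sets \(G(\Phi, A) \times \prod_i \stlin\bigl(\Phi, A^{(\infty, sk_i)}\bigr) \to G(\Phi, A) \times \stlin\bigl(\Phi, A^{(\infty, s)}\bigr)\)). Your explicit Peiffer-identity computation showing that a cocone of crossed-module morphisms automatically satisfies the conjugation relation is exactly the step the paper compresses into ``by Theorem \ref{st-pro-gluing} it follows,'' so you have usefully made the key point explicit rather than deviated from it.
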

\begin{proof}
Let \(X\) be a crossed pro-module over \(G(\Phi, A)\) or \(\stlin(\Phi, A)\) and \(f_i \colon \stlin\bigl(\Phi, A^{(\infty, sk_i)}\bigr) \to X\) be morphisms of crossed pro-modules such that \(f_i \circ \can^{sk_i k_j}_{sk_i} = f_j \circ \can^{sk_i k_j}_{sk_j}\). By theorem \ref{st-pro-gluing} it follows that there is a unique morphism of pro-groups \(g \colon \stlin\bigl(\Phi, A^{(\infty, s)}\bigr) \to X\) such that \(g \circ \can^{sk_i}_s = f_i\). It remains to check that \(g\) is \(G(\Phi, A)\)-equivariant (or \(\stlin(\Phi, A)\)-equivariant). But the product map \(G(\Phi, A) \times \prod_{i = 1}^n \stlin\bigl(\Phi, A^{(\infty, sk_i)}\bigr) \to G(\Phi, A) \times \stlin\bigl(\Phi, A^{(\infty, s)}\bigr)\) is a epimorphism of pro-sets (this easily follows from theorem \ref{st-pro-group}), so the required equivariance follows from the equivariance of \(f_i\).
\end{proof}

Finally, we generalize lemma \ref{local-actions} to arbitrary multiplicative subsets.

\begin{theorem} \label{weak-local-actions}
There is a unique weak action of all \(G(\Phi, S^{-1} A)\) on \(\stlin\bigl(\Phi, A^{(\infty, S)}\bigr)\) for multiplicative \(S \subseteq K\) and absolute objects \(A\) in \(\mathcal A\) such that the morphisms \(\stlin\bigl(\Phi, A^{(\infty, \mathfrak p)}\bigr) \to \stlin\bigl(\Phi, A^{(\infty, S)}\bigr)\) are \(G(\Phi, S^{-1} A)\)-equivariant for all prime ideals \(\mathfrak p \leqt K\) disjoint with \(S\). Such weak actions are natural on \(A\) and extranatural on \(S\) in the following sense: for any multiplicative subsets \(S \subseteq S'\) the morphism \(\stlin\bigl(\Phi, A^{(\infty, S')}\bigr) \to \stlin\bigl(\Phi, A^{(\infty, S)}\bigr)\) is \(G(\Phi, S^{-1} A)\)-equivariant.
\end{theorem}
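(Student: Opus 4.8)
The plan is to prove uniqueness first, then existence, and to deduce naturality on $A$ and extranaturality on $S$ from uniqueness. A weak action of $G(\Phi, S^{-1}A)$ on $\stlin\bigl(\Phi, A^{(\infty, S)}\bigr)$ is a homomorphism $\rho$ into the automorphism pro-group of $\stlin\bigl(\Phi, A^{(\infty, S)}\bigr)$. The required equivariance forces $\rho(g) \circ \can_{\mathfrak p} = \can_{\mathfrak p} \circ \rho_{\mathfrak p}(g)$ for every prime $\mathfrak p$ disjoint with $S$, where $\can_{\mathfrak p} \colon \stlin\bigl(\Phi, A^{(\infty, \mathfrak p)}\bigr) \to \stlin\bigl(\Phi, A^{(\infty, S)}\bigr)$ and $\rho_{\mathfrak p}$ is the weak action of Lemma \ref{local-actions} pulled back along $G(\Phi, S^{-1}A) \to G(\Phi, A_{\mathfrak p})$ (this morphism exists since $S \subseteq K \setminus \mathfrak p$). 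By Lemma \ref{costalks} the family of all $\can_{\mathfrak p}$ is jointly epimorphic in $\Pro(\Group)$, so $\rho(g)$ is determined, whence $\rho$ is unique if it exists. Once existence is established, naturality and extranaturality follow by the same joint-epimorphism argument, since the two candidate actions one is comparing agree after restriction to every costalk.

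For existence I would first treat the single-element case $S = \{1, k, k^2, \dots\}$, $S^{-1}A = A_k$, and then assemble. Because $G(\Phi, -)$ commutes with direct limits, $G(\Phi, S^{-1}A) = \varinjlim_{s \in \mathcal S} G(\Phi, A_s)$, and likewise $\stlin\bigl(\Phi, A^{(\infty, S)}\bigr) \cong \varprojlim^{\Pro}_{s} \stlin\bigl(\Phi, A^{(\infty, s)}\bigr)$; an element of $G(\Phi, S^{-1}A)$ comes from some $G(\Phi, A_s)$, and the extranaturality to be proved is exactly the compatibility that lets the single-element actions descend to the limit. Thus the entire content is to produce a weak action of $G(\Phi, A_s)$ on $\stlin\bigl(\Phi, A^{(\infty, s)}\bigr)$ compatible with the costalk maps $\can_{\mathfrak p}$ for every prime $\mathfrak p \not\ni s$.

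To build this single-element action I would, for each root $\alpha$, first construct a strong action of $D_\alpha(\Phi, A_s)$ on $\stlin\bigl(\Phi, A^{(\infty, s)}\bigr)$ by the device used in the proof of Theorem \ref{st-pro-group}: the root-elimination isomorphism of Lemma \ref{root-elimination} together with the normalization of each $\stlin(\Sigma, {-})$ by $D_\alpha$ for thick $\alpha$-series $\Sigma$. These actions agree at every prime with those of Lemma \ref{local-actions}. Now fix $g \in G(\Phi, A_s)$. Since each $G(\Phi, A_{\mathfrak p})$ is generated by its $D_\alpha(\Phi, A_{\mathfrak p})$ and $\Spec(K_s)$ is quasi-compact, I would spread the local $D_\alpha$-factorizations out to a finite principal covering $s \in \sum_i k_i K$ so that over each $A_{sk_i}$ the restriction of $g$ is an honest product of elements of the $D_\alpha(\Phi, A_{sk_i})$; the entries and the defining equation spread out because everything is finitely presented. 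The per-factor actions then yield an automorphism of each $\stlin\bigl(\Phi, A^{(\infty, sk_i)}\bigr)$, and the gluing presentation of Theorem \ref{st-pro-gluing}, equivalently the cosheaf property of Theorem \ref{st-pro-cosheaf}, assembles them into an endomorphism $\psi_g$ of $\stlin\bigl(\Phi, A^{(\infty, s)}\bigr)$.

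The hard part will be well-definedness: $\psi_g$ must be independent of the covering and of the chosen $D_\alpha$-factorization of $g$, and $g \mapsto \psi_g$ must be an invertible homomorphism. I would resolve all of this uniformly by coseparatedness. Any two morphisms $\stlin\bigl(\Phi, A^{(\infty, s)}\bigr) \to \stlin\bigl(\Phi, A^{(\infty, s)}\bigr)$ produced in this way become equal after precomposition with every $\can_{\mathfrak p}$, because at each prime both reduce, by construction and by the first paragraph, to the one unique local automorphism of Lemma \ref{local-actions}; since the $\can_{\mathfrak p}$ are jointly epimorphic, the two morphisms coincide. The same reduction to primes yields $\psi_{gh} = \psi_g \psi_h$ and $\psi_1 = \id$, so each $\psi_g$ is an automorphism and $g \mapsto \psi_g$ a weak action, with costalk equivariance holding by construction. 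Passing to the filtered limit over $s \in \mathcal S$ then gives the action of $G(\Phi, S^{-1}A)$ on $\stlin\bigl(\Phi, A^{(\infty, S)}\bigr)$, with naturality and extranaturality supplied by the uniqueness argument above.
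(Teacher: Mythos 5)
Your proposal is correct and follows essentially the same route as the paper: uniqueness, naturality, and extranaturality via the coseparatedness of Lemma \ref{costalks}; reduction to $S = \{1, s, s^2, \ldots\}$; a finite principal covering over which $g$ becomes a product of elements of the $D_\alpha(\Phi, A_{sk_i})$ acting through Lemma \ref{root-elimination}; assembly via the gluing presentation of Theorem \ref{st-pro-gluing}; and the observation that the resulting monoid homomorphism into endomorphisms necessarily lands in automorphisms. The only cosmetic difference is that the paper verifies the three compatibility identities required by Theorem \ref{st-pro-gluing} explicitly, whereas you delegate that verification (along with well-definedness) to the joint epimorphy of the costalk maps, which is an acceptable substitute.
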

\begin{proof}
Uniqueness, naturality on \(A\), and extranaturality on \(S\) follow from lemma \ref{costalks}. In order to show the existence it suffices to consider only multiplicative subset of type \(S = \{1, s, s^2, \ldots\}\) for some \(s \in K\). Let \(g \in G(\Phi, A_s)\). By lemma \ref{local-actions} there is a Zariski covering \(\Spec(A_s) = \cup_{i = 1}^n \Spec(A_{sk_i})\) (we may assume that \(s \in \sum_i k_i K\)) such that the image of \(g\) in every \(G(\Phi, A_{sk_i})\) lies in the product of various \(D_\alpha(\Phi, A_{sk_i})\), so by lemma \ref{root-elimination} \(g\) induces automorphisms of \(\stlin\bigl(\Phi, A^{(\infty, sk_i)}\bigr)\) with the required property. In order to obtain an induced endomorphism of \(\stlin\bigl(\Phi, A^{(\infty, s)}\bigr)\) we use theorem \ref{st-pro-gluing} and the identities
\begin{align*}
\can^{sk_i}_s\bigl(\up g{(hh')}\bigr) &= \can^{sk_i}_s\bigl(\up gh\, \up g{h'}\bigr) = \can^{sk_i}_s\bigl(\up gh)\, \can^{sk_i}_s\bigl(\up g{h'}\bigr); \\
\can^{sk_i}_s\bigl(\up g{\can^{sk_i k_j}_{sk_i}(h)}\bigr) &= \can^{sk_i}_s\bigl(\can^{sk_i k_j}_{sk_i}\bigl(\up gh\bigr)\bigr) = \can^{sk_j}_s\bigl(\can^{sk_i k_j}_{sk_j}\bigl(\up gh\bigr)\bigr) = \can^{sk_j}_s\bigl(\up g{\can^{sk_i k_j}_{sk_j}(h)}\bigr); \\
\up{\can^{sk_i}_s(\up gh)}{\can^{sk_j}_s\bigl(\up g{h'}\bigr)} &= \can^{sk_j}_s\bigl(\up{g\, \can^{sk_i}_1(\stmap(h))}{h'}\bigr).
\end{align*}
It follows that \(g\) induces an endomorphism of \(\stlin\bigl(\Phi, A^{(\infty, s)}\bigr)\) with the required property. The uniqueness of such endomorphisms implies that we have a monoid homomorphism \(G(\Phi, A_s) \to \mathrm{End}\bigl(\stlin\bigl(\Phi, A^{(\infty, s)}\bigr)\bigr)\) with the required property, it necessarily takes images in \(\Aut\bigl(\stlin\bigl(\Phi, A^{(\infty, s)}\bigr)\bigr)\).
\end{proof}

\bibliographystyle{plain}  
\bibliography{references}

\end{document}